\documentclass[a4paper,11pt,reqno]{amsart}


\usepackage{url}
\usepackage[colorlinks=true, linkcolor=blue, citecolor=ForestGreen]{hyperref}
\usepackage{cite}

\usepackage{latexsym,amsmath,amssymb,amsfonts,amsthm}
\usepackage{mathrsfs}
\usepackage{mathtools}
\usepackage{dsfont}
\usepackage{bbm}
\usepackage{soul}     
\usepackage[utf8]{inputenc}
\usepackage{comment}
\usepackage{ulem}

\usepackage[usenames, dvipsnames]{color}
\usepackage[usenames, dvipsnames, cmyk]{xcolor}
\usepackage{graphicx}
\usepackage[scriptsize,hang,raggedright]{subfigure}
\usepackage{multirow}
\usepackage{enumerate}
\usepackage{enumitem}

\usepackage{a4wide}
\usepackage{epsfig,epstopdf}


\linespread{1.15} 
\setlength{\footskip}{30pt}


\setcounter{tocdepth}{1}
\numberwithin{equation}{section}

\definecolor{grey}{rgb}{.7,.7,.7}
\definecolor{refkey}{gray}{.45}
\definecolor{labelkey}{gray}{.45}








\newtheorem{theorem}{Theorem}[section]
\newtheorem{prop}[theorem]{Proposition}
\newtheorem{lemma}[theorem]{Lemma}
\newtheorem{corol}[theorem]{Corollary}

\newtheorem*{mainthm}{Main Theorem}

\theoremstyle{remark}
\newtheorem{remark}[theorem]{Remark}
\theoremstyle{definition}
\newtheorem{defin}[theorem]{Definition}


\def\eps{\varepsilon}
\def\H{\mathscr H}

\def\L{\mathscr{L}}

\def\M{\mathsf{M}}
\def\P{\mathcal P}
\def\R{\mathbb R}
\def\S{\mathbb S}
\def\N{\mathbb N}

\def\AA{\mathsf{A}}
\def\AW{\mathcal{AW}}
\def\AP{\mathsf{AP}}

\def\Id{\mathrm{Id}}
\def\W{\mathcal{W}}

\def\bal{\begin{aligned}}
\def\eal{\end{aligned}}
\def\proofof#1{\begin{proof}[Proof of #1]}
\def\Chi#1{\hbox{{\large $\chi$}{\Large $_{_{#1}}$}}}

\def\XXint#1#2#3{{\setbox0=\hbox{$#1{#2#3}{\int}$} \vcenter{\vspace{-1pt}\hbox{$#2#3$}}\kern-.5\wd0}}

\def\XXiint#1#2#3{{\setbox0=\hbox{$#1{#2#3}{\iint}$} \vcenter{\vspace{-1pt}\hbox{$#2#3$}}\kern-0.5\wd0}}

\def\spt{{\rm spt}}
\newcommand{\norm}[1]{\left\lVert#1\right\rVert}
\newcommand{\weakstar}{\overset{\ast}{\rightharpoonup}}
\DeclareMathOperator{\diam}{diam}
\newcommand{\scal}[2]{\langle #1, #2 \rangle}
\def\bdry{\partial}

\newcommand{\res}{\mathop{\hbox{\vrule height 7pt width .5pt depth 0pt
\vrule height .5pt width 6pt depth 0pt}}\nolimits}



\newcounter{mt}

\def\maintheoremdeclaration#1{\stepcounter{mt}\newcounter{#1}\setcounter{#1}{\arabic{mt}}}

\maintheoremdeclaration{existence}
\maintheoremdeclaration{Linfty}
\maintheoremdeclaration{Unique}

\allowdisplaybreaks


\title[Maximizers of nonlocal interactions of {W}asserstein type]{Maximizers of nonlocal interactions of {W}asserstein Type}

\author{Almut Burchard}
\address[Almut Burchard]{Department of Mathematics, University of Toronto, ON, Canada}
\email{almut@math.toronto.edu}

\author{Davide Carazzato}
\address[Davide Carazzato]{Scuola Normale Superiore, Pisa, Italy}
\email{davide.carazzato@sns.it }

\author{Ihsan Topaloglu}
\address[Ihsan Topaloglu]{Department of Mathematics and Applied Mathematics, Virginia Commonwealth University, Richmond VA, United States}
\email{iatopaloglu@vcu.edu}

\date{\today}

 \subjclass[2020]{49Q05, 49Q20, 49Q22, 49J35}
 \keywords{max-min problem, optimal transport, symmetrization-by-reflection, Wasserstein distance}
\thanks{This is a post-peer-review, pre-copyedit version of an article published in ESAIM: Control, Optimisation and Calculus of Variations. The final
authenticated version is available online at: \url{https://doi.org/10.1051/cocv/2024068}.}

\begin{document}

\normalem

\begin{abstract}
We characterize the maximizers of a functional that involves the minimization of the Wasserstein distance between sets of equal volume. 
We prove that balls are the only maximizers by combining a symmetrization-by-reflection technique with the uniqueness of optimal transport plans. Further, in one dimension, we provide a sharp quantitative refinement of this maximality result.
\end{abstract}

\maketitle

\section{Introduction}\label{sec: intro}

In this paper we study a max-min problem involving the Wasserstein distance between two sets of equal volume. Specifically, for any $p>1$ we consider the following energy defined on subsets of $\R^N$:
\begin{equation}\label{eq:wasserstein}
    \W_p(E)\coloneqq \inf\Big\{W_p(\L^N\res E,\L^N\res F)\colon|F|=|E|,|E\cap F|=0\Big\},
\end{equation}
where $W_p(\mu_1,\mu_2)$ is the $p$-Wasserstein distance between two measures $\mu_1,\mu_2\in\M_+(\R^N)$ with $\mu_1(\R^N)=\mu_2(\R^N)<+\infty$. Here $\L^N$ denotes the Lebesgue measure in $\R^N$, and for any measurable set $E\subset \R^N$, we use the notation $|E|=\L^N(E)$.

The right hand side of \eqref{eq:wasserstein} defines a free
boundary problem associated with optimal 
partial transport. In these problems, 
given two measures $\mu_1,\mu_2$ and a mass $m\le\min\{\mu_1(\mathbb{R}^N),\mu_2(\mathbb{R}^N)\}$, the objective is to select portions 
$\tilde \mu_1,\tilde \mu_2$ of mass $m$ that minimize $W_p(\mu_1,\mu_2)$. Caffarelli and McCann~\cite{CMcC2010} introduce this problem, prove basic results on existence and uniqueness, and analyze the geometry of the solution when $p=2$. They show that for $i=1,2$, each of the optimal measure
$\tilde \mu_i$ agrees with $\mu_i$ on
some set $F_i$ (the {\em active regions}) and vanishes on the complement. 
A fundamental concern addressed in \cite{CMcC2010} is the
regularity of the free boundaries $\partial F_i$. Subsequent refinements of these regularity results can be found in~\cite{F2010,I2013}.

In the case of \eqref{eq:wasserstein},
the source for the partial transport problem is $\mu_1=\tilde \mu_1= \mathcal{L}^N \res E$, the mass is $m=\mathcal{L}^N(E)$, the target measure is $\mu_2=\mathcal{L}^N \res (\mathbb{R}^N\setminus E)$, and the active region for the target is~$F$. This belongs
to a class of problems where the Wasserstein distance is minimized among mutually singular measures that has  been investigated by Buttazzo, Carlier, and Laborde in \cite{BCL2020} for any $p\geq 1$. In particular, given a measure $\mu$ they prove that the infimum is achieved among measures that are singular with respect to $\mu$. Under the additional constraint that the measure has density bounded by 1, they show that the optimal solution is given by the characteristic function of a set.

In \cite{BCL2020} the authors also analyze the perimeter regularization of \eqref{eq:wasserstein}. Namely, they consider the problem
    \begin{equation}\label{eq:wasserstein-perimeter}
    \inf \Big\{ P(E) + \lambda W_p({\mathcal{L}^N \res} E,{\mathcal{L}^N \res}F) \colon  E, F \subset \R^N, \,|E\cap F|=0, \, |E|=|F|=1 \Big\},
\end{equation}
and show that minimizers exist for arbitrary $\lambda>0$, if the admissible sets $E$ and $F$ are required to be subsets of a bounded domain $\Omega$. This problem (with $p=1$) is proposed by Peletier and R\"{o}ger as a simplified model for lipid bilayer membranes where the sets $E$ and $F$ represent the densities of the hydrophobic tails and hydrophilic heads of the two part lipid molecules, respectively \cite{PR2009,LPR2014}. The perimeter term represents the interfacial energy arising from hydrophobic effects, while the Wasserstein term models the weak bonding between the heads and tails of the molecules.

When posed over the unbounded space, Buttazzo, Carlier and Laborde prove the existence of minimizers for the problem \eqref{eq:wasserstein-perimeter} in two dimensions. Xia and Zhou \cite{XZ2021} extend this result to higher dimensions but under the additional assumptions that $\lambda$ is sufficiently small and that $p<n/(n-2)$. Recently, Novack, Venkatraman and the third author \cite{NTV2023} prove that minimizers to \eqref{eq:wasserstein-perimeter} exist in any dimension \emph{and} for all values of $\lambda>0$ and $p \in [1,\infty)$. Simultaneously, Candau-Tilh and Goldman \cite{C-TG2022} also obtain the existence of minimizers via an alternative argument and characterize global minimizers in the small $\lambda$ regime. The analysis in \cite{C-TG2022} and \cite{NTV2023} show that there is a direct competition between the perimeter and the Wasserstein terms in \eqref{eq:wasserstein-perimeter}. This, also as pointed out by Rupert Frank to the third author, leads to the question whether the functional \eqref{eq:wasserstein} is \emph{maximized} when the set $E$ is a ball. Here, we resolve this question for $p>1$.

It often happens that we need to relax a functional to exploit some compactness. We denote by $\AA_m$ the class of admissible densities with mass $m$ that we use to relax the problem, i.e., 
\[
    \AA_m\coloneqq \left\{\rho\in L^1(\R^N)\colon0\leq \rho\leq 1, \int\rho \, dx=m\right\}.
\]
We will use the shorthand notation $\AA\coloneqq\AA_1$ when we deal with probability densities. We define the relaxation of \eqref{eq:wasserstein} to densities $\rho$ with $0\leq \rho\leq 1$ as follows:
\begin{equation}\label{eq:wasserstein-densities}
\begin{split}
    \W_p(\rho)\coloneqq \inf\left\{W_p(\rho,\rho')\colon 0 \leq \rho', \ 0\leq \rho+\rho'\leq 1, \int\rho' \, dx=\int\rho \, dx\right\}.
\end{split}
\end{equation}

\smallskip

Our main result is the following theorem.

\smallskip
\begin{mainthm}
    The unique maximizer of \eqref{eq:wasserstein-densities} in the class $\AA_m$, up to translations, is the characteristic function of a ball $B$ with $|B|=m$.
\end{mainthm}

\medskip

By \cite[Proposition 5.2]{DMSV2016} in the case $p=2$, and by the same result combined with \cite[Theorem 3.10]{BCL2020} and  \cite[Proposition 2.1]{C-TG2022} in the case $p\neq 2$, the expression \eqref{eq:wasserstein-densities} extends the definition on sets given in \eqref{eq:wasserstein}. By these results, we also have that for any $\rho\in\AA_m$ there is a unique density $\eta_{\rho}$ realizing \eqref{eq:wasserstein-densities} when $p>1$. Note that, for $p>1$ \cite[Theorem 2.44]{V2003} guarantees that there is only one optimal transport plan $\pi_{\rho}$ between $\rho$ and $\eta_{\rho}$, and it is induced by a map.

The class of transport plans, which we will call \textit{admissible plans}, that play a role in the definition of $\W_p(\rho)$ is given by
\begin{equation*}
\begin{split}
    \AP_{\rho} \coloneqq \left\{\pi\in\M_+(\R^N\times \R^N)\colon (p_1)_{\#}\pi = \rho\L^N, \ (p_2)_{\#}\pi\leq (1-\rho)\L^N\right\},
\end{split}
\end{equation*}
where $\M(\R^N)$ denotes the set of signed Borel measures in $\R^N$, and $\M_+(\R^N)\subset \M(\R^N)$ denotes the set of non-negative measures. Here $p_1$ and $p_2$ are the two usual projections from $\R^N\times \R^N$ in $\R^N$. Notice that, thanks to the properties of the push-forward, it is automatically true that the density of $(p_2)_{\#}\pi$ with respect to $\L^N$ belongs to $\AA_m$ whenever $\rho\in\AA_m$ and $\pi\in \AP_{\rho}$.

\begin{remark}\label{rem:metric-measure-setting}
    We point out that the energy $\W_p(\rho)$ can be defined on any metric space with a reference measure (in our case, the euclidean space $\R^N$ endowed with $\L^N$). If $(X,d)$ is a Polish metric space, and $\gamma\in\M_+(X)$ is a Borel measure, then for any density $\rho\colon X\to[0,1]$ we can define its Wasserstein energy as
    \[
        \W_p(\rho) \coloneqq \inf \left\{W_p(\rho\gamma,\rho'\gamma)\colon 0 \leq \rho', \ \rho+\rho'\leq1, \ \int\rho'\,d\gamma = \int\rho \,d\gamma\right\},
    \]
    and the $p$-Wasserstein distance can be defined in any metric space. We continue to denote by $\AP_{\rho}$ the set of admissible plans, i.e.
    \[
        \AP_{\rho} = \left\{\pi\in\M_+(X\times X)\colon (p_1)_{\#}\pi = \rho\gamma, \ (p_2)_{\#}\pi\leq(1-\rho)\gamma\right\}.
    \]    
    We cannot expect to have many invariance properties in an abstract setting, but some analytic-flavoured features could be retrieved in wide generality. We will not use this abstract formulation in this paper, with the exception of Proposition~\ref{prop:maximizer-1D} where we consider the space $X=\R^+$ with a weight. This appears because in Section~\ref{sec:maximizer} we reduce to radial densities, and it is convenient to look at them as $1$-dimensional densities (a weight pops up because of the coarea formula).
\end{remark}

\medskip

\subsection*{Plan of the paper}
In Section~\ref{sec:preliminary results} we introduce some preliminary results that are useful for the problem. After recalling briefly some well-known theorems about the existence and uniqueness of the optimal transport map, we introduce some very simple properties of the functional $\W_p$ that were essentially already present in the literature for slightly different problems. In particular, Lemma~\ref{lem:full-ball} is devoted to the saturation of the constraint in a certain region, and Corollary~\ref{cor:bounded-transport-distance} provides a uniform control on the transport distance. These two results are quite robust, as they do not require any geometric property of the Euclidean space, but just its metric-measure structure. Lemma~\ref{lem:weak-continuity-wasserstein} and Lemma~\ref{lemma:symmetry-transport} are an original contribution. The first one, which shows the continuity of the functional $\W_p$ with respect to the weak$*$ convergence (when there is no loss of mass), is fundamental for the existence of maximizers for $\W_p$. The second one, on the other hand, shows that some symmetries of a density $\rho$ can be inherited by the optimal plan $\pi_{\rho}$ that realizes $\W_p(\rho)$.
In Section~\ref{sec:maximizer} we deal with the maximizers of $\W_p$, whose existence is proved in Proposition~\ref{thm:existence-maximizers} applying the concentration compactness principle. This is a building block also for our successive characterization of the maximizers, since we combine a symmetrization technique and the uniqueness of the optimal transport plan to show that the maximizers have some symmetry. We proceed as follows:
\begin{enumerate}
    \item prove that the segments maximize a 1-dimensional weighted version of $\W_p$, in Proposition~\ref{prop:maximizer-1D};
    \item prove that, if $\rho$ is a given maximizer, then the optimal transport plan realizing $\W_p(\rho)$ is radial. This is contained in Corollary~\ref{cor:radial-transport-for-maximizers}, as a consequence of Lemma~\ref{lemma:non-crossing};
    \item combine the first two points to show that the maximizers have to be star-shaped sets, and then conclude that the ball is the only possible maximizer thanks to the saturation of the constraint exposed in Lemma~\ref{lem:full-ball}. This is contained in Theorem~\ref{thm:ball-max}, and it is our main contribution.
\end{enumerate}

Finally, in Section~\ref{sec:quant_ineq} we prove a quantitative version of this maximality result in one dimension, where we show that the deficit of maximality is controlled from below by the square of an asymmetry given as the $L^1$ distance between the ball and any density. Our inequality is asymptotically sharp, in the sense that the exponent of the asymmetry cannot be lowered.

\medskip

A few days before submitting this paper, we became aware of the independent work by Candau-Tilh, Goldman and Merlet \cite{C-TGM2023preprint} (posted on arXiv on September 6, 2023) studying the same maximization problem. Their result is more general, as it considers a broader class of cost functions in the transport problem. In particular, they prove that the characteristic function of the ball maximizes \eqref{eq:wasserstein-densities} when the transport cost is of the form $c(x)=h(|x|)$ with a continuous and increasing function $h$ such that $h(0)=0$ and $h\to\infty$ as $|x|\to \infty$. Our strategy, pursued in Section~\ref{sec:maximizer} 
is more geometric, and circumvents the need to introduce Kantorovich potentials in the transport problem. While we believe that also our strategy can be extended to cover more general cost functions, our proofs rely on the metric structure induced by the $p$-Wasserstein distance as well as on the homogeneity of the cost function which allows us to use scaling properties of the energy.

\medskip

\subsection*{Notation}
Throughout the paper, with an abuse of notation, we will denote the Wasserstein distance between two disjoint set, $W_p(\L^N\res E,\L^N\res F)$, by $W_p(E,F)$. By $B_r(x)$ we will denote the open ball of center $x$ and radius $r$, and we will write $B_r$ for $B_r(0)$. The cube of side length $2l$ centered at the origin will be denoted by $Q_l = [-l,l]^N\subset \R^N$; hence, $Q_l(x) = x+Q_l$. We will use the notation $E_k\weakstar f$ to denote the convergence of a sequence of sets $\{E_k\}_{k\in\N}$ in the sense that the sequence of measures $\{\L^N\res E_k\}_{k\in\N}$ weak$*$ converges to the measure $f\L^N$.

For $\rho\in\AA$ by $\eta_{\rho}$ we will denote any density in $\AA$ such that $\W_p(\rho) = W_p(\rho,\eta_{\rho})$. Note that for $p>1$ we have that $\eta_\rho$ is unique (cfr. \cite[Remark 3.11]{BCL2020}). Similarly, for $\rho\in\AA$, $\pi_{\rho}$ will denote the optimal plan $\W_p^p(\rho) = \int|x-y|^p\,d\pi_{\rho}(x,y)$, and $T_{\rho}$ is the optimal transport map that induces $\pi_{\rho}$. If we have a density $f$, we will sometimes use the short-hand notation $T_{\#}f$ to denote the push forward of the measure $T_{\#}(f\L^N)$.

\medskip
\section{Preliminary results}\label{sec:preliminary results}

\subsection{The optimal transport problem} We introduce in this section the optimal transport problem The general theory is well developed, and goes far beyond the needs of this paper. We state the relevant results just in the setting that we need. The interested reader may find much more general statements, and much deeper developments, in the references that we cite, as well as in other books on the subject. A crucial restriction that we impose is to work with cost $c(x)=|x|^p$ with $p>1$ and (mostly) in the Euclidean space $\R^N$. This plays a role when we characterize the maximizers of $\W_p$ since we use some uniqueness result valid for these special cost functions. Other parts of our strategy work also for $p=1$ with a slightly different discussion. The next definitions describe rigorously our framework. 

A general setting for the optimal transport problem is that of Polish metric spaces, which are defined as follows.
\begin{defin}[Polish metric space]
    A metric space $(X,d)$ is \textit{Polish} if it is complete and separable.
\end{defin}
\begin{defin}[Push forward]
    Let $(X,d_X)$ and $(Y,d_Y)$ be two Polish metric spaces. Given $f:X\to Y$ a Borel function, and given a measure $\mu\in\M(X)$, the \textit{push forward} of $\mu$ induced by $f$ is a new measure denoted by $f_{\#}\mu$. It is defined as follows: for every $A\subset Y$ Borel, we have that
    \[
        (f_{\#}\mu)(A) = \mu(f^{-1}(A)).
    \]
\end{defin}

Given a Polish metric space $(X,d)$, a real exponent $p>1$, and two measures $\mu_1,\mu_2\in \M_+(X)$ with $\mu_1(X) = \mu_2(X)<+\infty$, we can consider the optimal transport problem with cost $c(x)=|x|^p$:
\[
    W_p^p(\mu_1,\mu_2) = \inf\left\{\iint_{X\times X} |x-y|^p\, d\pi(x,y): \pi\in\M_+(X\times X): (p_1)_{\#}\pi = \mu_1, (p_2)_{\#}\pi = \mu_2\right\}.
\]
It is well known that for every couple of marginals $\mu_1$ and $\mu_2$ the infimum is attained (see \cite[Theorem 1.3]{V2003} for a more general result). In some special cases, there are some structure theorems for the optimal transport plans, i.e. those measures $\pi$ that realize the aforementioned infimum. The following is such a result that holds for strictly convex costs.

\begin{theorem}{\cite[Theorem 2.44]{V2003}}\label{thm:existence-uniqueness-transport}
    Let $p>1$ be given, and $\mu_1,\mu_2\in\M_+(\R^N)$ be two measures with $\mu_1(\R^N) = \mu_2(\R^N)<+\infty$. Suppose that $\mu_1\ll \L^N$ and that $W_p(\mu_1,\mu_2)<+\infty$. Then, there is a unique optimal transport plan $\pi$, and it is of the form
    \[
        \pi = (\Id,T)_{\#}\mu_1,
    \]
where $T$ denotes the unique optimal transport map.
\end{theorem}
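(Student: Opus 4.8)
This is a classical result; the plan is to recall the standard argument via Kantorovich duality and the strict convexity of $z\mapsto|z|^p$.

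First I would establish existence of an optimal plan. The set of admissible couplings $\{\pi\in\M_+(\R^N\times\R^N):(p_1)_{\#}\pi=\mu_1,\ (p_2)_{\#}\pi=\mu_2\}$ is nonempty, tight (its marginals are fixed, hence tight), and weak$*$ closed; by Prokhorov it is weak$*$ compact. Since $\pi\mapsto\iint|x-y|^p\,d\pi$ is weak$*$ lower semicontinuous and finite somewhere (because $W_p(\mu_1,\mu_2)<+\infty$), it attains its minimum at some $\pi$.

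Next I would invoke Kantorovich duality to produce a $c$-concave potential $\varphi$ for the cost $c(x,y)=|x-y|^p$, with $c$-transform $\varphi^c(y)=\inf_x\bigl(|x-y|^p-\varphi(x)\bigr)$, such that $\varphi(x)+\varphi^c(y)\le|x-y|^p$ everywhere with equality $\pi$-a.e.; equivalently, $\spt\pi$ is $c$-cyclically monotone. Using $W_p(\mu_1,\mu_2)<+\infty$ together with the fact that for $p>1$ the cost $(x,y)\mapsto|x-y|^p$ is locally Lipschitz, one shows that $\varphi$ is finite and locally Lipschitz on an open set carrying full $\mu_1$-measure; since $\mu_1\ll\L^N$, Rademacher's theorem gives differentiability of $\varphi$ at $\mu_1$-a.e.\ $x$. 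At such an $x$, for any $y$ with $(x,y)\in\spt\pi$ the map $x'\mapsto\varphi(x')-|x'-y|^p$ is maximized at $x'=x$, so $\nabla\varphi(x)=p|x-y|^{p-2}(x-y)$. Because $p>1$ the gradient $z\mapsto\nabla(|z|^p)=p|z|^{p-2}z$ is injective (indeed a homeomorphism of $\R^N$), so this equation determines $x-y$, hence $y$, uniquely in terms of $\nabla\varphi(x)$; this defines a Borel map $T$ with $y=T(x)$ for $\pi$-a.e.\ $(x,y)$, i.e.\ $\pi=(\Id,T)_{\#}\mu_1$.

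Finally, for uniqueness: if $\pi_0,\pi_1$ are both optimal, then so is $\bar\pi=\tfrac12(\pi_0+\pi_1)$, whose first marginal is still $\mu_1\ll\L^N$; by the previous paragraph $\bar\pi$ is concentrated on the graph of a map. Disintegrating $\bar\pi$, $\pi_0$ and $\pi_1$ with respect to $\mu_1$, the fibers of $\bar\pi$ are Dirac masses, which forces the fibers of $\pi_0$ and $\pi_1$ to coincide with them; hence $\pi_0=\pi_1$, and the optimal map is unique. I expect the delicate point to be the regularity claim, namely showing that the Kantorovich potential is differentiable $\mu_1$-a.e.\ when the measures need not be compactly supported and only $W_p<+\infty$ is known; the local Lipschitz bound and the homogeneity of $|z|^p$ for $p>1$ are precisely what make this work, and this is where the argument genuinely breaks for $p=1$.
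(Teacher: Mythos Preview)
The paper does not prove this theorem at all: it is stated with a citation to \cite[Theorem~2.44]{V2003} and used as a black box throughout. Your sketch is the standard Gangbo--McCann/Brenier-type argument (existence by tightness and lower semicontinuity, Kantorovich duality to produce a $c$-concave potential, Rademacher differentiability $\mu_1$-a.e.\ from $\mu_1\ll\L^N$, injectivity of $\nabla(|z|^p)$ for $p>1$ to pin down $y$, and the convex-combination trick for uniqueness), which is essentially what one finds in Villani's proof, so there is nothing to compare against within the paper itself.

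Your sketch is sound in outline. The one place I would flag as needing more care is exactly the one you already identify: showing the potential is locally Lipschitz on a set of full $\mu_1$-measure when the supports are unbounded and only $W_p<+\infty$ is assumed. This requires an argument (e.g.\ reducing to the $c$-subdifferential being nonempty and locally bounded on a suitable set, or the approximation/restriction tricks in Villani) that is not entirely trivial; but you correctly locate the difficulty and the reason $p>1$ matters.
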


In Section~\ref{sec:maximizer} it is crucial to characterize the maximizers in one dimension to later pass to higher dimension. Our task is simplified in one dimension because the transport problem has a very easy solution.

\begin{theorem}{\cite[Remarks 2.19]{V2003}}\label{thm:transport-1D}
    Let $p>1$ be given, and let $\mu_1,\mu_2\in\M_+(\R)$ be two measures with $\mu_1(\R) = \mu_2(\R) <+\infty$. If they are non-atomic, then the only optimal transport map realizing $W_p(\mu_1,\mu_2)$ is monotone.
\end{theorem}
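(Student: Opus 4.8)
The plan is to deduce monotonicity of the optimal map from the \emph{strict} submodularity of the cost $c(t)=|t|^p$, $p>1$, together with the fact that the support of any optimal plan is $c$-monotone, and then to pass from the plan to a map using that $\mu_1$ is non-atomic.

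First I would record the elementary inequality that for $x_1<x_2$ and $y_1<y_2$,
\[
|x_1-y_1|^p+|x_2-y_2|^p<|x_1-y_2|^p+|x_2-y_1|^p .
\]
Setting $h(t)=|t|^p$ and $\lambda=\frac{x_2-x_1}{(x_2-x_1)+(y_2-y_1)}\in(0,1)$, a direct computation gives $x_1-y_1=\lambda(x_1-y_2)+(1-\lambda)(x_2-y_1)$ and $x_2-y_2=(1-\lambda)(x_1-y_2)+\lambda(x_2-y_1)$; since $x_1-y_2\neq x_2-y_1$, the strict convexity of $h$ --- valid exactly because $p>1$ --- applied to each of these two terms and summed yields the claim. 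It is this strictness that makes the conclusion fail for $p=1$, where monotone plans are optimal but generally not unique.

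Now let $\pi$ be any optimal plan for $W_p(\mu_1,\mu_2)$. Its support is $c$-monotone: for $(x_1,y_1),(x_2,y_2)\in\mathrm{supp}\,\pi$ one has $|x_1-y_1|^p+|x_2-y_2|^p\le|x_1-y_2|^p+|x_2-y_1|^p$, since otherwise rerouting a little mass along these two pairs would strictly lower the total cost. Comparing this with the strict inequality above rules out any pair with $x_1<x_2$ and $y_1>y_2$, so $\mathrm{supp}\,\pi$ is a non-decreasing subset of $\R\times\R$. Such a set is contained in the graph of a non-decreasing function off the at most countably many abscissae over which it carries a nondegenerate vertical segment; as $\mu_1$ is non-atomic it charges no countable set, so $\pi$ is concentrated on the graph of a non-decreasing map $T$ and $\pi=(\Id,T)_\#\mu_1$. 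Since any non-decreasing map transporting $\mu_1$ to $\mu_2$ must coincide $\mu_1$-a.e.\ with the monotone rearrangement $F_{\mu_2}^{-1}\circ F_{\mu_1}$ (with $F_{\mu_i}$ the cumulative distribution functions and $F_{\mu_2}^{-1}$ the right-continuous generalized inverse), the optimal transport map is unique and monotone.

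I expect the only delicate point to be this final step: the measure-theoretic bookkeeping showing that a plan with non-decreasing support is induced by a \emph{unique} monotone map. This is precisely where the non-atomicity of $\mu_1$ enters --- without it a vertical segment of positive $\mu_1$-mass would obstruct both the ``is a map'' and the uniqueness conclusions. Everything else (the convexity inequality and the two-point rerouting that gives $c$-monotonicity) is routine.
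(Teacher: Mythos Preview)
Your argument is correct and is precisely the standard one: strict convexity of $t\mapsto|t|^p$ for $p>1$ gives the strict submodularity inequality, which combined with $c$-monotonicity of the support of any optimal plan forces that support to be monotone; non-atomicity of $\mu_1$ then kills the (at most countably many) vertical segments, yielding a map, and the monotone-rearrangement formula $F_{\mu_2}^{-1}\circ F_{\mu_1}$ gives uniqueness. Your convex-combination computation for the inequality is clean and checks out.

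There is, however, nothing to compare against: the paper does \emph{not} supply a proof of this statement. It is quoted as a preliminary fact from Villani's book (Remarks~2.19 there), with no argument given. Your write-up is essentially the proof one finds sketched in that reference, so in this sense you have reconstructed the cited source rather than diverged from the paper.
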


\medskip

\subsection{Properties of $\W_p$}
The most basic fact is the following existence theorem.
\begin{theorem}{\cite[Section 5]{DMSV2016}}\label{thm:existence-W_p}
   Let $p>1$ be given. For any $m>0$ and for any $\rho\in\AA_m$, there exists a unique density, called $\eta_{\rho}\in\AA_m$, realizing the infimum in \eqref{eq:wasserstein-densities}.
\end{theorem}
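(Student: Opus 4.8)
The plan is to prove existence by the direct method and uniqueness by combining the convexity of the admissible class of targets with the uniqueness of the Brenier-type optimal map from Theorem~\ref{thm:existence-uniqueness-transport}.

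For existence, I would take a minimizing sequence $\rho'_k$ for \eqref{eq:wasserstein-densities}, so that $W_p(\rho,\rho'_k)\to\W_p(\rho)$ (which is finite: an admissible competitor at finite Wasserstein distance always exists, as is implicit in the cited references). The crucial step is to show that $\{\rho'_k\L^N\}$ is uniformly tight, so that a subsequence converges weak$*$ with no loss of mass. This follows from the coercivity of the cost: since $\rho\L^N$ is a finite, hence tight, measure, and since an optimal plan $\pi_k$ for $W_p(\rho,\rho'_k)$ has bounded cost, points sent from a large ball $B_{R_0}$ (carrying almost all of $\rho$) out past $B_R$ travel at least $R-R_0$, whence $\rho'_k(B_R^c)\le C/(R-R_0)^p+\rho(B_{R_0}^c)$, which is small uniformly in $k$ once $R_0$ and then $R$ are chosen large. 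Passing to a weak$*$ limit $\rho'_k\L^N\weakstar\eta$ with $\eta(\R^N)=m$, the bound $0\le\rho'_k\le 1-\rho$ survives in the limit (test against non-negative continuous functions with compact support), so $\eta=\eta_\rho\L^N$ with $\eta_\rho$ admissible. Lower semicontinuity of $W_p$ under weak$*$ convergence with fixed total mass — obtained by extracting a weak$*$ limit of the (tight) plans $\pi_k$ and using the Portmanteau theorem for the non-negative lower semicontinuous integrand $|x-y|^p$ — then gives $W_p(\rho,\eta_\rho)\le\liminf_k W_p(\rho,\rho'_k)=\W_p(\rho)$; the opposite inequality is immediate from admissibility, so $\eta_\rho$ realizes the infimum.

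For uniqueness, suppose $\eta_0$ and $\eta_1$ both realize $\W_p(\rho)$, with corresponding optimal plans $\pi_0,\pi_1$. Since $p>1$ and $\rho\L^N\ll\L^N$ with $W_p(\rho,\eta_i)=\W_p(\rho)<\infty$, Theorem~\ref{thm:existence-uniqueness-transport} says each $\pi_i$ is the \emph{unique} optimal plan between $\rho\L^N$ and $\eta_i\L^N$ and has the form $(\Id,T_i)_\#(\rho\L^N)$. The set of admissible targets $\{\rho'\colon 0\le\rho'\le 1-\rho,\ \int\rho'=m\}$ is convex, so $\eta_{1/2}=\tfrac12(\eta_0+\eta_1)$ is admissible, and $\pi_{1/2}=\tfrac12(\pi_0+\pi_1)$ couples $\rho\L^N$ with $\eta_{1/2}\L^N$; by linearity of $\pi\mapsto\int|x-y|^p\,d\pi$ its cost equals $\tfrac12 W_p^p(\rho,\eta_0)+\tfrac12 W_p^p(\rho,\eta_1)=\W_p(\rho)^p$, which forces $W_p(\rho,\eta_{1/2})=\W_p(\rho)$ and makes $\pi_{1/2}$ optimal for $(\rho\L^N,\eta_{1/2}\L^N)$. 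Applying Theorem~\ref{thm:existence-uniqueness-transport} once more, $\pi_{1/2}$ must be concentrated on a graph over the first variable; but its disintegration at $x$ with respect to $\rho\L^N$ is $\tfrac12\delta_{T_0(x)}+\tfrac12\delta_{T_1(x)}$, which is a single Dirac mass only when $T_0(x)=T_1(x)$. Hence $T_0=T_1$ $\rho$-a.e., so $\pi_0=\pi_1$ and $\eta_0=(p_2)_\#\pi_0=(p_2)_\#\pi_1=\eta_1$.

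I expect the main obstacle to be the no-loss-of-mass (tightness) step in the existence part, where the coercivity of the $p$-cost and the tightness of the fixed marginal $\rho$ must be played off against each other; once compactness is in hand, existence is routine lower semicontinuity. The hypothesis $p>1$ enters only in uniqueness, through the uniqueness of the optimal transport map, after which the convexity argument is short.
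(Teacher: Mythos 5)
Your proof is correct: the direct-method existence argument (with tightness forced by the coercivity of the $p$-cost against the tight marginal $\rho$) and the uniqueness argument via convexity of the target class plus the graph structure of the optimal plan from Theorem~\ref{thm:existence-uniqueness-transport} are exactly the standard route, which is what the cited references \cite{DMSV2016,BCL2020} carry out — the paper itself only imports the statement and gives no proof. The only point worth one extra line is the finiteness of the infimum, which you assert parenthetically; it follows by partitioning $\R^N$ into cubes of volume $\ge 2m$ and transporting within each cube, as is done elsewhere in the paper.
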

Combining this result with Theorem~\ref{thm:existence-uniqueness-transport} we obtain the existence and uniqueness of the optimal transport plan $\pi_{\rho}$ and the map inducing it, called $T_{\rho}$, which satisfy
\[
    \W_p^p(\rho) = W_p^p(\rho,\eta_{\rho}) = \int |x-y|^p \, d\pi_{\rho}(x,y) = \int |x-T_{\rho}(x)|^p\rho(x) \, dx.
\]
We point out that the objects $\eta_{\rho}$, $\pi_{\rho}$ and $T_{\rho}$ all depend implicitly on $p$. We do not stress that dependence because we suppose $p>1$ to be fixed in the whole paper.

The following lemma establishes a key a geometric property of the optimal plan $\pi_{\rho}$. In the case of the quadratic cost ($p=2$) on $\mathbb{R}^N$, this property is known, see for example \cite[Corollary 2.4]{CMcC2010} and  \cite[Lemma 5.1]{DMSV2016}. The proof of the following lemma is purely metric and uses only the optimality of $\eta_{\rho}$.

\begin{lemma}[Interior ball condition] \label{lem:full-ball}
    Let $(X,d)$ be a Polish metric space, and let $\gamma\in\M_+(X)$ be a given measure. Let $\rho\colon X\to[0,1]$ be a Borel density. If $\pi$ is an optimal plan to compute $\W_p(\rho)$ and $(x,y)\in\spt \pi$, then
    \begin{equation}\label{eq:full-ball}
        (p_2)_{\#}\pi = (1-\rho)\gamma \qquad \gamma-\text{a.e. in }B_{|y-x|}(x).
    \end{equation}
    Moreover, 
    $ (p_2)_{\#}\pi \geq \min\{1-\rho,\rho\}\gamma$.
\end{lemma}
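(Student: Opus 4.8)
The plan is to argue by contradiction using the optimality of $\eta_\rho = (p_2)_\#\pi/\gamma$ against a competitor obtained by a local rearrangement of mass. Fix $(x,y)\in\spt\pi$ and suppose \eqref{eq:full-ball} fails, so there is a set $A\subset B_{|y-x|}(x)$ of positive $\gamma$-measure on which $(p_2)_\#\pi < (1-\rho)\gamma$ strictly, i.e. the target density $\eta_\rho$ leaves some room below the constraint $1-\rho$ there. The idea is that the plan $\pi$ is sending a little bit of mass from a neighborhood of $x$ all the way to a neighborhood of $y$, but it could instead drop that mass into $A$, which is strictly closer to $x$, thereby strictly decreasing the transport cost while keeping the marginal constraints satisfied. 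This contradicts optimality of $\pi$ (and, after taking the new second marginal, of $\eta_\rho$).

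To make this rigorous I would first use $(x,y)\in\spt\pi$ to find, for every small $\delta>0$, a positive amount of mass $\varepsilon_\delta = \pi(B_\delta(x)\times B_\delta(y)) > 0$. Choosing $\delta$ small enough that $B_\delta(y)$ and a suitable subset $A'\subset A$ are both contained in $B_{|y-x|-2\delta}$... more precisely, so that any point of $B_\delta(x)$ is strictly closer to any point of $A'$ than to any point of $B_\delta(y)$ (possible since $|x-y| > $ the radius bounding $A'$, by the triangle inequality and $A'\subset B_{|y-x|}(x)$ with a quantitative gap), and shrinking $A'$ so that $\gamma(A') \le \varepsilon_\delta$ while still having room $(1-\rho)\gamma - (p_2)_\#\pi \ge c\gamma$ on $A'$ for some $c>0$. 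Then define the competitor plan $\tilde\pi$ by removing from $\pi$ the portion of mass supported in $B_\delta(x)\times B_\delta(y)$ (rescaling, take mass $\min\{\varepsilon_\delta, \text{capacity of }A'\}$) and re-routing it: keep the first marginal by transporting that mass from the same $x$-points, but send it into $A'$ instead, distributed according to the available slack $(1-\rho)\gamma - (p_2)_\#\pi$ restricted to $A'$ and normalized. One checks $(p_1)_\#\tilde\pi = \rho\gamma$ is unchanged, and $(p_2)_\#\tilde\pi \le (1-\rho)\gamma$ still holds because we only added mass where there was slack; hence $\tilde\pi\in\AP_\rho$. The cost strictly decreases because every re-routed unit of mass now travels a distance $\le |x-y| - \delta$ instead of $\ge |x-y|-\delta$... one must be a little careful, but with the gap engineered above the new cost per unit mass is bounded by $(|x-y|-2\delta)^p$ versus old cost $\ge (|x-y|-2\delta+\text{something})^p$; it is cleaner to just say: points of $B_\delta(x)$ reach $A'$ at distance at most $r_{A'} + \delta$ where $r_{A'} < |x-y|$ is the radius containing $A'$, while in $\pi$ they were paired with points of $B_\delta(y)$ at distance at least $|x-y|-2\delta$, and choosing $\delta$ small makes $r_{A'}+\delta < |x-y|-2\delta$. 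This yields $W_p^p(\rho,(p_2)_\#\tilde\pi) < W_p^p(\rho,\eta_\rho) = \W_p(\rho)$, contradicting the definition of $\W_p(\rho)$ since $(p_2)_\#\tilde\pi \in \AA$ with the right mass.

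For the final assertion $(p_2)_\#\pi \ge \min\{1-\rho,\rho\}\gamma$: at $\gamma$-a.e. point $z$ either no mass of $\pi$ originates arbitrarily close to $z$ — more precisely, if $(z,z)$-type mass... actually the clean argument is: if $(p_2)_\#\pi(z) < (1-\rho)(z)$ on a positive-measure set where also $\rho(z) > (p_2)_\#\pi(z)$, pick such a point and a point $(x,y)\in\spt\pi$ with $x$ near $z$; apply \eqref{eq:full-ball} to get $(p_2)_\#\pi = (1-\rho)\gamma$ near $z$ provided $z\in B_{|x-y|}(x)$. The point is that mass originating at $\rho$ cannot be transported distance zero into the same location (since $\rho$ and $\eta_\rho$ are mutually singular-ish, $\rho + \eta_\rho \le 1$), so $T_\rho$ moves every point a positive distance, and then \eqref{eq:full-ball} applied along a short transport ray forces the constraint to be saturated throughout a ball covering $z$. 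Making the "mass originates nearby" step precise — showing that wherever $\rho > (p_2)_\#\pi$, one can find a support point $(x,y)$ of $\pi$ with $z$ strictly inside $B_{|x-y|}(x)$ — is the main obstacle, and it is exactly where $0 \le \rho + \eta_\rho \le 1$ (so that transport truly moves mass off of the set $\{\rho>0\}$ where $\eta_\rho$ is small) gets used. I expect the first part (the re-routing contradiction) to be the technical heart; the "moreover" then follows by a covering/support-point argument layered on top of \eqref{eq:full-ball}.
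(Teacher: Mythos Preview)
Your argument for the first assertion \eqref{eq:full-ball} is correct and follows the same re-routing strategy as the paper: locate slack in the target constraint inside a ball $B_{r-c\delta}(x)$, peel off a small amount of $\pi$-mass from $B_\delta(x)\times B_\delta(y)$, and redirect it to the slack region at strictly smaller cost. The paper makes the geometry explicit by taking $E\subset B_{r-4\delta}(x)$, so that points of $B_\delta(x)$ are at distance at most $r-3\delta$ from $E$ but at distance at least $r-2\delta$ from $B_\delta(y)$; your version is less tidy but captures the same idea.

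For the ``moreover'' part, however, there is a genuine gap. Your argument hinges on the claim that ``$T_\rho$ moves every point a positive distance'', justified by saying $\rho$ and $\eta_\rho$ are ``mutually singular-ish'' because $\rho+\eta_\rho\le 1$. This is wrong: the constraint $\rho+\eta_\rho\le 1$ does not prevent overlap, and in fact if $\rho\le 1/2$ on some region then $\eta_\rho=\rho$ there (with $T_\rho=\Id$) is perfectly admissible and even optimal. So you cannot assume the transport is nontrivial. The paper's argument runs in the opposite direction: on the bad set $E$ where $(p_2)_\#\pi<\min\{1-\rho,\rho\}\gamma$, it shows that the transport \emph{must} be trivial, because any point $x\in E$ with $(x,y)\in\spt\pi$ and $y\ne x$ would, by the first part, force $(p_2)_\#\pi=(1-\rho)\gamma$ in a neighborhood of $x$, contradicting $x\in E$. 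But trivial transport on $E$ means $(p_2)_\#(\pi\res(E\times X))=\rho\Chi{E}\gamma$, so $(p_2)_\#\pi\ge\rho\gamma$ on $E$, contradicting the definition of $E$. Your outline can be repaired by inserting exactly this dichotomy (trivial vs.\ nontrivial transport) in place of the unjustified positivity claim.
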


\begin{proof}
    We first show that $ (p_2)_{\#}\pi$ saturates the constraint in the ball, and the second statement will follow easily. The idea is very simple: if $\pi$ does not saturate the constraint in that ball, then we can lower the energy of $\rho$ adding some mass close to $x$. We define $r=|y-x|$. Let us suppose by contradiction that there exist $\eps,\delta>0$ and a set $E\subset B_{r-4\delta}(x)$ with $\gamma(E)$ strictly positive and finite and such that
    \[
        (1-\rho)\gamma-(p_2)_{\#}\pi \geq \eps \gamma\qquad \text{in }E.
    \]
    We take $\mu_1 = (p_1)_{\#}(\pi\res B_{\delta}(x)\times B_{\delta}(y))$ and $\mu_2 = \eps\gamma\res E$, and we modify $\pi$ in the following way: we take $0<t<\min\{1,\mu_1(X)/\mu_2(X)\}$, and we take
    \begin{equation*}
    \begin{split}
        \tilde \pi = \pi - t\frac{\mu_2(X)}{\mu_1(X)}\pi\res(B_{\delta}(x)\times B_{\delta}(y)) + \frac{t}{\mu_1(X)}\mu_1\times\mu_2.
    \end{split}
    \end{equation*}
    One can check that $\tilde \pi\in\AP_{\rho}$ thanks to our choice of $t$. Since $\pi$ is an optimal plan to compute $\W_p(\rho)$, we have that
    \begin{equation*}
    \begin{split}
        0&\leq \int  |x'-y'|^p\,(d\tilde\pi-d\pi) \\
        &\leq= -t\frac{\mu_2(X)}{\mu_1(X)}\int_{B_{\delta}(x)\times B_{\delta}(y)} |x'-y'|^p\,d\pi+\frac{t}{\mu_1(X)}\int |x'-y'|^p\,d\mu_1 \, d\mu_2\\
        &\leq -t\frac{\mu_2(X)}{\mu_1(X)}(r-2\delta)^p\mu_1(X)+\frac{t}{\mu_1(X)}(r-4\delta+\delta)^p\mu_1(X)\mu_2(X)\\
        &= t\mu_2(X)\left[(r-3\delta)^p-(r-2\delta)^p\right]<0,
    \end{split}
    \end{equation*}
    and thus we reach a contradiction.

    We now address the second inequality. Suppose by contradiction that the opposite inequality holds in a set $E\subset X$ with $\int_E \rho d \gamma>0$. Then, thanks to what we have proved so far, we know that the set
    \begin{equation}\label{eq:no-motion}
        \{x\in E\colon \spt \pi\cap (\{x\}\times  X) = (x,x)\}
    \end{equation}
    has full $\gamma$-measure in $E$. In fact, if this was not the case, then we could find $E'\subset E$ with $\gamma(E')>0$ and such that, for every $x\in E'$, there exists $y\in X\setminus \{x\}$ such that $(x,y)\in\spt \pi$. Then, using \eqref{eq:full-ball} we find an open covering of $E'$ where the contradiction hypothesis is not satisfied, contrary to the definition of $E$. Condition \eqref{eq:no-motion} means that we are not moving mass in $E$, and thus
    \[
        (p_2)_{\#}(\pi\res (E\times X)) = (p_1)_{\#}(\pi\res(E\times X)) = \rho\Chi{E}\gamma.
    \]
    But then $ (p_2)_{\#}\pi \geq (p_2)_{\#}(\pi\res (E\times X)) = \rho\Chi{E}\gamma$, which is incompatible with our contradiction hypothesis. 
\end{proof}

\begin{corol}\label{cor:bounded-transport-distance}
    Consider the functional $\W_p$ on the Euclidean space $\R^N$ with the usual metric and the Lebesgue measure $\L^N$. There exists a constant $C_N<+\infty$ such that, for any $\rho\in\AA_m$ and for any $(x,y)\in \spt \pi_{\rho}$, we have that
    \begin{equation}\label{eq:bounded-transport-distance}
        |x-y|\leq C_Nm^{\frac{1}{N}}.
    \end{equation}
    Here, $\pi_\rho$ is any optimal transport plan $\pi_{\rho}$ associated to $\rho$ and $\eta_{\rho}$. In particular $\W_p^p(\rho)\leq C_Nm^{1+\frac{p}{N}}$.
\end{corol}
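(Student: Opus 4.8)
The plan is to derive the bound directly from the interior ball condition (Lemma~\ref{lem:full-ball}) together with an elementary comparison of volumes. Fix $\rho\in\AA_m$, let $\eta_\rho\in\AA_m$ be the optimal density provided by Theorem~\ref{thm:existence-W_p}, and let $\pi_\rho$ be an optimal plan for $\W_p(\rho)$, which exists and is induced by a map by Theorem~\ref{thm:existence-uniqueness-transport}. By construction the second marginal of $\pi_\rho$ is $\eta_\rho\L^N$, and admissibility in \eqref{eq:wasserstein-densities} forces $0\le\eta_\rho\le 1-\rho$ together with $\int_{\R^N}\eta_\rho\,dx = m$.

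Now take any $(x,y)\in\spt\pi_\rho$ and set $r\coloneqq|x-y|$. Lemma~\ref{lem:full-ball}, applied with $\gamma=\L^N$ and $\pi=\pi_\rho$, says that $\eta_\rho = 1-\rho$ holds $\L^N$-a.e.\ on $B_r(x)$. Integrating this identity over $B_r(x)$ and invoking both mass constraints gives
\[
    m \;=\; \int_{\R^N}\eta_\rho\,dx \;\ge\; \int_{B_r(x)}\eta_\rho\,dx \;=\; |B_r| - \int_{B_r(x)}\rho\,dx \;\ge\; |B_r| - m,
\]
since $\int_{B_r(x)}\rho\le\int_{\R^N}\rho = m$. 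Hence $|B_1|\,r^N = |B_r|\le 2m$, i.e.\ $|x-y| = r \le (2/|B_1|)^{1/N}\,m^{1/N}$, which is \eqref{eq:bounded-transport-distance} with $C_N = (2/|B_1|)^{1/N}$. The last assertion then follows at once: since $\pi_\rho$ is concentrated on the set $\{|x-y|\le C_N m^{1/N}\}$ and has total mass $m$, we obtain $\W_p^p(\rho) = \int|x-y|^p\,d\pi_\rho \le C_N^p\,m^{1+p/N}$, after which we relabel the constant.

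I do not expect any genuine analytic obstacle here — all the real content is in Lemma~\ref{lem:full-ball}, and the rest is bookkeeping. The only point that needs care is juggling the two competing constraints simultaneously: the source $\rho$ and the optimal target $\eta_\rho$ both carry mass $m$, while pointwise $\eta_\rho\le 1-\rho$, and it is precisely this combination that upgrades the saturation $\eta_\rho = 1-\rho$ on $B_r(x)$ to the volume estimate $|B_r|\le 2m$ rather than to something weaker. It is also worth noting that finiteness of $\W_p(\rho)$, implicit in speaking of $\pi_\rho$, in fact comes out a posteriori from the support bound, so the statement is self-contained once the existence/uniqueness results recalled above are in hand.
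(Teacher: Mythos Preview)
Your proof is correct and follows essentially the same route as the paper: both apply Lemma~\ref{lem:full-ball} at a point $(x,y)\in\spt\pi_\rho$ and combine the saturation $\eta_\rho=1-\rho$ on $B_{|x-y|}(x)$ with the mass constraints $\int\rho=\int\eta_\rho=m$ to deduce $|B_{|x-y|}|\le 2m$. The only cosmetic difference is that the paper phrases this contrapositively (fixing $r$ with $|B_r|=2m$ and arguing that $|x-y|>r$ would violate \eqref{eq:full-ball}), whereas you argue directly; the content and the resulting constant $C_N=(2/\omega_N)^{1/N}$ are identical.
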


\begin{proof}
This is a consequence of Lemma~\ref{lem:full-ball}. If $r>0$ we have that
\[
    \int_{B_r(z)}\rho+\rho'\, dx\leq 2m
\]
for all $\rho'\in\AA_m$ and all $z\in\R^N$.
Thus, if we fix $r$ such that $|B_r|=2m$,  
then the conclusion \eqref{eq:full-ball} of Lemma~\ref{lem:full-ball} fails for any pair of points $(x,y)$ with $|x-y|>r$. Hence such a pair cannot lie in $\spt\pi_\rho$. It follows that every pair $(x,y)\in\spt\pi_{\rho}$ satisfies
\[
    |x-y|\leq r = \left(\frac{2}{\omega_N}\right)^{\frac1N}m^{\frac1N}.
\]
The estimate on $\W_p(\rho)$ follows by integrating this inequality with respect to the measure $\pi_{\rho}$.
\end{proof}

\begin{remark}\label{rem:scaling-wasserstein}
    We report here the scaling behavior of the energy $\W_p$, which is established in \cite[Lemma 2.5]{NTV2023} for sets. Let $\rho$ be a density satisfying the constraint $0\leq \rho\leq 1$ and let $t>0$ be a given constant. If we rescale $\tilde \rho(x)=\rho(x/t)$, then  $\W_p^p(\tilde \rho) = t^{p+N}\W_p^p(\rho)$. In fact, it is sufficient to consider the density $\eta_{\rho}(\cdot/t)$, rescaling appropriately the transport map.
\end{remark}

\begin{lemma}[Continuity of $\W_p$]\label{lem:weak-continuity-wasserstein}
    Let $\rho\in\AA_m$ be a given density and let $\{\rho_n\}_{n\in\N}\subset \AA_m$ be a sequence such that $\rho_n\weakstar\rho$. Then, the limit of $\W_p(\rho_n)$ exists and $\W_p(\rho) = \lim_n \W_p(\rho_n)$.
\end{lemma}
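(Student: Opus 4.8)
The plan is to prove that $\W_p$ is sequentially continuous along mass-preserving weak$*$ convergent sequences by establishing upper and lower semicontinuity separately, both exploiting the uniform bound on transport distances from Corollary~\ref{cor:bounded-transport-distance}. The key observation is that every competitor and every optimal plan involved lives on a fixed bounded region once the mass $m$ is fixed: if $\rho_n \weakstar \rho$ in $\AA_m$, the supports need not be uniformly bounded, but the \emph{transport distance} is uniformly bounded by $C_N m^{1/N}$, so only the local structure matters and the lack of tightness of the $\rho_n$ themselves is harmless.

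First I would prove $\limsup_n \W_p(\rho_n) \le \W_p(\rho)$. Let $\eta = \eta_\rho$ be the optimal target for $\rho$ and $\pi_\rho \in \AP_\rho$ the optimal plan. The idea is to build, for each $n$, an admissible competitor plan $\pi_n \in \AP_{\rho_n}$ whose cost is close to $\W_p^p(\rho)$. One natural approach: disintegrate $\pi_\rho$ with respect to its first marginal $\rho\L^N$ to get transition kernels $x \mapsto \pi_{\rho,x}$, and try to "transplant" this kernel onto $\rho_n$. The difficulty is that $(p_2)_\#(\rho_n\cdot\pi_{\rho,\cdot})$ may violate the constraint $\le (1-\rho_n)\L^N$. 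To fix this, I would instead approximate by mollification: since weak$*$ convergence plus the $L^\infty$ bound gives strong $L^1_{\mathrm{loc}}$-type control after convolution, one can first replace $\rho$ and $\rho_n$ by mollified versions $\rho * \varphi_\varepsilon$, $\rho_n * \varphi_\varepsilon$ (which still lie in $\AA_m$ and for which $\W_p$ changes by $O(\varepsilon)$ by an elementary transport estimate using Remark~\ref{rem:scaling-wasserstein}-type bounds), and then use that $\rho_n * \varphi_\varepsilon \to \rho * \varphi_\varepsilon$ locally uniformly. With uniform continuity in hand one can rescale the optimal target $\eta$ slightly, or perturb it, to produce a genuinely admissible $\eta_n$ for $\rho_n$ with controlled Wasserstein cost; a clean way is to note that $W_p(\rho_n, \eta') \to W_p(\rho, \eta')$ whenever $\rho_n \weakstar \rho$ with fixed mass and all measures supported in a common bounded set, by weak$*$ continuity of $W_p$ on a fixed compact set, and then choose $\eta'$ to be an $\varepsilon$-optimal competitor for $\rho$ that also satisfies the constraint for all large $n$ after a small dilation.

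Next the lower bound $\liminf_n \W_p(\rho_n) \ge \W_p(\rho)$. Take optimal plans $\pi_n := \pi_{\rho_n}$. By Corollary~\ref{cor:bounded-transport-distance}, each $\pi_n$ is supported in $\{(x,y) : |x-y| \le C_N m^{1/N}\}$, and its first marginal is $\rho_n\L^N$ with $\rho_n \le 1$; hence the second marginals $\eta_n = (p_2)_\# \pi_n$ satisfy $\eta_n \le (1-\rho_n)\L^N \le \L^N$. The family $\{\pi_n\}$ need not be tight, but one can localize: cover $\R^N$ by a fixed lattice of unit cubes, and on each cube the restrictions form a tight family. A diagonal/compactness argument along cubes, or alternatively passing to the "doubled" measures and using that $\rho_n \weakstar \rho$ pins the first marginal, yields a subsequence with $\pi_n \weakstar \pi$ for some $\pi \in \M_+(\R^N\times\R^N)$ with $(p_1)_\#\pi = \rho\L^N$ and $(p_2)_\#\pi \le (1-\rho)\L^N$ (the inequality constraint passes to the limit under weak$*$ convergence since it is closed), so $\pi \in \AP_\rho$. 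Lower semicontinuity of $\iint |x-y|^p\,d\pi_n$ under weak$*$ convergence (the integrand is continuous and nonnegative, and by the uniform support bound we may even treat it as bounded continuous) gives $\iint |x-y|^p \, d\pi \le \liminf_n \W_p^p(\rho_n)$. Since $\pi \in \AP_\rho$, the left side is $\ge \W_p^p(\rho)$, completing the lower bound. Finally, combining the two bounds forces the full sequence $\W_p(\rho_n)$ to converge to $\W_p(\rho)$, since every subsequence has a further subsequence converging to $\W_p(\rho)$.

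I expect the main obstacle to be the upper semicontinuity step, specifically producing a \emph{genuinely admissible} competitor $\eta_n \in \AA_m$ for $\rho_n$ satisfying the hard constraint $\rho_n + \eta_n \le 1$ while keeping $W_p(\rho_n, \eta_n)$ close to $\W_p(\rho)$: the constraint couples $\eta_n$ to $\rho_n$ pointwise, and naive transplantation breaks it. The mollification-plus-small-dilation device, together with the weak$*$ continuity of $W_p$ on a fixed bounded set, is the route I would push on; the lower bound, by contrast, is a fairly standard direct-method argument once the uniform transport-distance bound of Corollary~\ref{cor:bounded-transport-distance} is invoked to handle the absence of global tightness.
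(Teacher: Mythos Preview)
Your lower-semicontinuity half is essentially the paper's argument: extract a weak$*$ limit of the optimal plans $\pi_{\rho_n}$ and check that the limit lies in $\AP_\rho$. One correction: the family $\{\pi_n\}$ \emph{is} tight, so your cube-localization workaround is unnecessary. Since $\rho_n\weakstar\rho$ in $\AA_m$ with no loss of mass, the first marginals $\rho_n\L^N$ are already tight, and Corollary~\ref{cor:bounded-transport-distance} then forces the plans to be tight in $\R^N\times\R^N$.

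The upper-semicontinuity argument, however, has a real gap, and it is exactly the one you flag at the end. Mollification does not break the circularity: from $\rho+\eta_\rho\le 1$ you get $\rho*\varphi_\eps+\eta_\rho*\varphi_\eps\le 1$, and $\rho_n*\varphi_\eps\to\rho*\varphi_\eps$ locally uniformly, but to exploit this you would need $\W_p(\rho_n)\le \W_p(\rho_n*\varphi_\eps)+O(\eps)$ uniformly in $n$, which is precisely the hard direction (building a competitor for the rough density from one for the smoothed density). Nor does a ``small dilation'' help: by Lemma~\ref{lem:full-ball} the optimal $\eta_\rho$ saturates $\rho+\eta_\rho=1$ on a set of positive measure, and because $\rho_n$ converges only weakly (think of characteristic functions oscillating between $0$ and $1$), the truncation deficit $\int(\eta_\rho+\rho_n-1)_+$ need not tend to zero, so no uniform pointwise slack is ever available. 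The paper circumvents this by working at the level of \emph{cube averages}, where weak$*$ convergence does give control: it partitions a large cube containing most of the mass into cubes of side $\eps$, places on each small cube $Q^k$ as much mass as the constraint $1-\rho_n$ allows (namely $\min\{\int_{Q^k}\zeta'_n,\int_{Q^k}(1-\rho_n)\}$, which for large $n$ is close to $\int_{Q^k}\eta_\rho$ since only finitely many cubes are involved), and dumps the small residual mass at bounded cost using a coarser partition outside. This yields a genuine competitor $\eta_n$ with $\rho_n+\eta_n\le 1$ and $W_p(\rho_n,\eta_n)\le \W_p(\rho)+C\eps$, and a contradiction argument finishes the proof. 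The missing idea in your sketch is to replace the pointwise constraint by an averaged one on small cubes, where the weak$*$ convergence actually bites.
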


\begin{proof} We prove this proposition in two steps. In the first step we establish that for any $p \geq 1$ \eqref{eq:wasserstein-densities} is the lower semicontinuous envelope of the functional in \eqref{eq:wasserstein} in the class $\AA_m$ with respect to the weak-$*$ topology. As a consequence, $\W_p$ is lower semicontinuous in $\AA_m$. In the second step we obtain the upper semicontinuity of $\W_p$ in $\AA_m$.

\medskip

\noindent\emph{Step 1.}    Thanks to Remark~\ref{rem:scaling-wasserstein} it suffices to consider the case $m=1$. Let $\{E_n\}_{n\in\N}$ be a sequence of sets with $|E_n|=1$ such that $E_n\weakstar \rho$ for some $\rho\in\AA$, and let us call $\rho_n=\Chi{E_n}$. Since we preserve the total mass, we know that for any $\eps>0$ there exist $R>0$ and $k\in\N$ such that $\int_{B_R}\rho_n \,dx>1-\eps$ for every $n>k$. Using Corollary~\ref{cor:bounded-transport-distance} we know that the transport distance is uniformly bounded by a constant $C$, and thus $\int_{B_{R+C}}\eta_{\rho_n}\,dx\geq 1-\eps$ for any $n>k$. Therefore, up to a subsequence, we have that also $\eta_{\rho_n}\weakstar \rho'$ for some density $\rho'$ with $\int \rho' \, dx=1$. It is then easy to see that $\rho+\rho'\leq 1$ almost everywhere, and thus
    \[
        \W_p(\rho)\leq W_p(\rho,\rho') \leq \liminf_n W_p(\rho_n,\eta_{\rho_n}) = \W_p(\rho_n),
    \]
    where we used the well-known lower semicontinuity of the Wasserstein distance (it is sufficient to take the weak limit of the optimal transport plans). This proves that the functional in \eqref{eq:wasserstein-densities} is smaller than the lower semicontinuous envelope of $\W_p$ with respect to the weak$*$ topology. Next, we will find a sequence that realizes the equality, proving that our definition of $\W_p(\rho)$ in $\AA$ \emph{is} the lower semicontinuous envelope of the functional defined in \eqref{eq:wasserstein}. 
    
    Given $\rho\in\AA$, for any $n\in\N$ we consider a partition of $\R^N$ with a family of cubes $\mathcal{F}_n = \{Q^k_n\}_{k\in\N}$ with diameter $1/n$. Thanks to the compatibility condition $\rho+\eta_{\rho}\leq 1$, for any $n$ we can find two sets $E_n$ and $F_n$ with $|E_n\cap F_n|=0$ and such that
    \[
        |E_n\cap Q^k_n| = \int_{Q^k_n}\rho \,dx,\qquad |F_n\cap Q^k_n|=\int_{Q^k_n}\eta_{\rho}\,dx,\qquad \forall Q^k_n\in\mathcal{F}_n.
    \]
    It is immediate to see that $E_n\weakstar \rho$ and $F_n\weakstar \eta_{\rho}$ as $n\to+\infty$. Recalling $m=1$, we also note that $W_p(E_n,\rho) \leq \diam (Q^k_n)$ and $W_p(\eta_{\rho},F_n)\leq \diam(Q^k_n)$. To see this, it is sufficient to consider the (non-optimal) transport plan given by
    \begin{equation}\label{eq:plan-lower-semicontinuity}
        \pi_n = \sum_{k\in\N} \frac{1}{|E_n\cap Q^k_n|}(\Chi{E_n\cap Q^k_n}\L^N)\times(\rho\Chi{Q^k_n}\L^N)\in\P(\R^N\times \R^N),
    \end{equation}
    and notice that $|x-y|\leq \diam(Q^k_n)=1/n$ for any $(x,y)\in\spt \pi_n$. The proof of the inequality for $F_n$ and $\eta_{\rho}$ is analogous, and thus we obtain that
    \[
        W_p(E_n,F_n) \leq W_p(E_n,\rho)+W_p(\rho,\eta_{\rho})+W_p(\eta_{\rho},F_n) \leq \frac{2}{n}+W_p(\rho,\eta_{\rho}).
    \]
    This, combined with the first part, shows that
    \[
        \W_p(\rho) = \inf_{E_n\weakstar\rho,|E_n|=m} \liminf_n \W_p(E_n)\qquad \forall \rho\in\AA.
    \]

\medskip

\noindent\emph{Step 2.} We recall that, thanks to Theorem~\ref{thm:existence-uniqueness-transport}, there exists an optimal transport map for every transport problem that we consider in this paper. Up to taking a subsequence, we may suppose that $\lim_n\W_p(\rho_n)$ exists, and argue that $\W_p(\rho) = \lim_n\W_p(\rho_n)$. Since we can extract such a subsequence from any given subsequence of $\{\rho_n\}_n$, this will guarantee the existence of that limit for the entire sequence. 

We proceed by contradiction, and we suppose that there exists $\delta>0$ such that $\W_p(\rho)<\lim_n\W_p(\rho_n)-\delta$. The idea is to modify $\eta_{\rho}$ and produce a competitor to compute $\W_p(\rho_n)$, proving that we cannot have a strict inequality. To proceed with this plan we first truncate the densities to guarantee a convergence in Wasserstein distance. Up to taking another subsequence, we can suppose that $\eta_{\rho_n}\weakstar \rho'$ for some $\rho'\in\AA$ with $\rho+\rho'\leq1$ (using the same argument as in Step 1). Since the sequences $\{\rho_n\}_n$ and $\{\eta_{\rho_n}\}_n$ do not lose mass, for any $\eps<1/2$ there exists $\bar n,k_1\in\N$ such that
    \begin{equation}\label{eq:large-cube}
        \int_{\R^N\setminus Q_{3k_1}}(\rho_n+\eta_{\rho_n})\, dx<\eps\qquad \forall n>\bar n.
    \end{equation}
We will choose $\eps$ later on in order to make some approximations precise enough to obtain a contradiction out of the strict inequality. 

Now take $k_2 = \lceil 3/\eps\rceil$, so that $k_2\eps\in[3,3+\eps]$, and we consider the cube $\bar Q = [-k_1k_2\eps,k_1k_2\eps]^N$. It is easy to see that we can partition $\R^N$ with a family $\mathcal{F} = \{Q^k\}_{k\in\N}$ of cubes with side length equal to $\eps$ and such that $|Q^k\cap \bar Q|\in\{0,\eps^N\}$ (i.e. $\mathcal{F}$ contains two disjoint subfamilies that partition $\bar Q$ and $\R^N\setminus\bar Q$). Moreover, it is also possible to find a partition of $\R^N\setminus\bar Q$ with a family $\tilde{\mathcal{F}}=\{\tilde Q^k\}_{k\in\N}$ of cubes with side length $k_2\eps$. We will use the first partition to control the cost of an approximation of $\eta_{\rho}$ inside $\bar Q$, where we move mass at short distance. The second one, on the other hand, will be used to estimate the energy carried by the mass outside of that cube (thanks to \eqref{eq:large-cube}, that mass is small). We call $T$ the optimal transport map between $\rho$ and $\eta_{\rho}$, and for any $n$ we define the truncated densities $\tilde\rho_n = \rho_n\Chi{\bar Q}$. For any $n$ we also take $L_n>0$ such that $\int_{Q_{L_n}}\rho \,dx = \int\tilde\rho_n\,dx $, and we define the densities $\zeta_n\coloneqq \rho\Chi{Q_{L_n}}$ and $\zeta'_n\coloneqq (T_{\rho})_{\#}\zeta_n$. Since $\rho_n\weakstar\rho$, then $\tilde\rho_n\weakstar \rho\Chi{\bar Q}$ and we can choose the sequence $\{L_n\}_n$ to be bounded. Moreover, we have that $\zeta_n\weakstar \rho\Chi{\bar Q}$. Since the supports of the truncated densities are equibounded, then the $p$th-moment of $\zeta_n$ converges, as well as the $p$th-moment of $\tilde\rho_n$, and thus $W_p(\tilde\rho_n,\zeta_n)\to0$ (see e.g. \cite[Theorem 7.12]{V2003})

We take $h^1_n$ any non-negative density such that $\rho_n+h_n^1\leq 1$ and for any $k\in\N$
\[
    \int_{Q^k}h^1_n \,dx = \min\left\{\int_{Q^k}\zeta'_n \,dx,\ \int_{Q^k}1-\rho_n\,dx\right\}.
\]
Since $\zeta_n' = (T_{\rho})_{\#}\zeta_n$, we can apply Corollary~\ref{cor:bounded-transport-distance} and see that $\spt h^1_n$ is contained in $Q_{L_n+C}$ for any $n$, where $C$ is a constant depending only on $N$. Since $\tilde\rho_n\weakstar \rho\Chi{\bar Q}$ and $\zeta'_n\weakstar (T_{\rho})_{\#}(\rho\Chi{\bar Q})$, then we have that $\norm{h^1_n}_1-\norm{\zeta'_n}_1\to0$ (notice that here only a finite number of cubes in $\mathcal{F}$ play an active role). We choose any non-negative density $h^2_n$ with $\spt h^2_n\subset 3\bar Q$ and such that
    \[
        \rho_n+h^1_n+h^2_n\leq 1\qquad \text{and}\qquad\norm{h^1_n+h^2_n}_1 = \norm{\tilde\rho_n}_1,
    \]
    and our candidate to compute $\W_p(\tilde\rho_n)$ will be $\tilde\rho'_n\coloneqq h^1_n+h^2_n$. Observe that, by definition of $h^1_n$ and thanks to the properties of the push-forward of measures, we have that $\norm{h^1_n}_1\leq \norm{\zeta'_n}_1 = \norm{\zeta_n}_1 = \norm{\tilde\rho_n}_1$.
    Thanks to the triangle inequality for the $p$-Wasserstein distance, we have that
    \[
        W_p(\tilde \rho_n,\tilde\rho'_n)\leq W_p(\tilde\rho_n,\zeta_n)+W_p(\zeta_n,\zeta_n')+W_p(\zeta_n',\tilde\rho'_n).
    \]
    The first term on the right hand side is going to $0$ because, as we already noticed, the sets $\spt\tilde\rho_n$ and $\spt\zeta_n$ are uniformly bounded and these densities are converging to $\rho\Chi{\bar Q}$. Hence, up to taking $\bar n$ large enough, we can suppose that $W_p(\tilde\rho_n,\zeta_n)<\eps$. Likewise, the last term is controlled by $\eps$, and we use a plan similar to \eqref{eq:plan-lower-semicontinuity} to show this.
    
    We choose a density $\zeta''_n\leq \zeta'_n$ such that
    \[
        \int_{Q^k}\zeta''_n \,dx= \int_{Q^k}h^1_n \,dx \qquad \forall k\in\N,
    \]
    and we consider the plan
    \[
        \tilde\pi_n = \sum_{k\in\N}\frac{1}{\norm{h^1_n\Chi{Q^k}}_1}(\zeta''_n\Chi{Q^k}\L^N)\times(h^1_n\Chi{Q^k}\L^N)+\frac{1}{\norm{h^2_n}_1}((\zeta'_n-\zeta''_n)\L^N)\times (h^2_n\L^N),
    \]
    where the sum is intended to run only on the indices for which $h^1_n\Chi{Q^k}$ is not identically zero. 
    Using $\tilde \pi_n$ as test plan to compute $W_p(\zeta'_n,\tilde\rho'_n)$ we obtain the following upper bound:
    \begin{equation*}
        W_p^p(\zeta'_n,\tilde\rho'_n)\leq \int |x-y|^p\,d\tilde\pi_n(x,y) \leq C\eps^p+\diam(\spt h^2_n+\zeta'_n)\left(\norm{\zeta'_n}_1-\norm{h^1_n}_1\right)\leq C\eps^p,
    \end{equation*}
    where we used that the mass of $h^1_n$ remains inside the small cubes with side length $\eps$, and the remaining mass is transported at finite distance in any case (the constant $C$ depends only on $N$ and $p$). The last inequality holds if we take $\bar n$, and thus $n$, large enough, and if we adjust the constant $C$. Adding up the various terms, we conclude that for any $n>\bar n$ there is an optimal transport plan $\pi_n$ for $\tilde\rho_n$ and $\tilde\rho'_n$ such that
    \begin{equation*}
    \begin{split}
        W_p(\tilde\rho_n,\tilde\rho'_n) = \left(\int|x-y|^p\,d\pi_n(x,y)\right)^{\frac{1}{p}}\leq W_p(\zeta_n,\zeta'_n)+C\eps.
    \end{split}
    \end{equation*}
    To conclude, we observe that the cubes in $\tilde{\mathcal{F}}$ are so large that we can find a non-negative density $h^3_n$ such that $\rho_n+\tilde\rho'_n+h^3_n\leq 1$ and
    \[
        \int_{\tilde Q^k}h^3_n \,dx = \int_{\tilde Q^k}\rho_n\,dx \qquad \forall k\in\N.
    \]
    Therefore, we consider the plan $\gamma_n$ associated to $\rho_n$ and $\tilde\rho'_n+h^3_n$ defined as
    \[
        \gamma_n = \pi_n + \sum_{k\in\N}\frac{1}{\norm{\rho_n\Chi{\tilde Q^k}}_1}(\rho_n\Chi{\tilde Q^k}\L^N)\times(h^3_n\Chi{\tilde Q^k}\L^N),
    \]
    again summing only on the cubes with non-trivial measure. This gives the following estimate for $W_p^p(\rho_n,\tilde\rho'_n+h^3_n)$:
    \begin{equation*}
    \begin{split}
        W_p^p(\rho_n,\tilde\rho'_n+h^3_n)&\leq \Big(W_p(\zeta_n,\zeta'_n)+C\eps\Big)^p+C\norm{h^3_n}_1\\
        &\leq \Big(\W_p(\rho)+C\eps\Big)^p+C\norm{h^3_n}_1\\
        &\leq \Big(\W_p(\rho_n)-\delta+C\eps\Big)^p+C\eps.
    \end{split}
    \end{equation*}
    Since $\delta>0$ is fixed and since the constant $C$ in that estimate depends only on $N$ and $p$, we can find $\eps$ small enough so that $W_p^p(\rho_n,\tilde\rho'_n+h^3_n)<\W_p^p(\rho_n)$, and this is impossible since $\tilde\rho'_n+h^3_n$ is a competitor in the definition of $\W_p(\rho_n)$.
\end{proof}

The next lemma describes particular symmetries of the problem \eqref{eq:wasserstein-densities} which are crucial in proving properties of maximizers of $\W_p$ in the next section.

\begin{lemma}[Symmetries of the transport problem]\label{lemma:symmetry-transport}
    Let $F\colon\R^N\to\R^N$ be an isometry and let $\rho\in\AA$ be a given density such that $F_{\#}(\rho\L^N) = \rho\L^N$. Then the following hold:
    \begin{enumerate}
        \item $F_{\#}(\eta_{\rho}\L^N) = \eta_{\rho}\L^N$ and $\tilde F_{\#}\pi_{\rho} = \pi_{\rho}$, where $\tilde F$ is the map from $\R^N\times \R^N$ into itself defined as $\tilde F(x,y) = (F(x),F(y))$.
        \item If $F$ is a reflection of the form $F(x) = x-2\scal{x}{\nu}\nu$ for some $\nu\in\S^{N-1}$, then we have that
        \begin{equation}\label{eq:split-hyperplane}
            \pi_{\rho}\left(\{(x,y)\colon\scal{x}{\nu}\scal{y}{\nu}<0\}\right) = 0.
        \end{equation}
        In other words, $\pi_{\rho}$ does not transport mass from one side of the reflection hyperplane $\{x\colon\scal{x}{\nu}=0\}$ to the other. 
    \end{enumerate}
\end{lemma}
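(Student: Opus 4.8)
The plan is to combine two facts: a Euclidean isometry $F$ preserves the Lebesgue measure ($F_{\#}\L^N=\L^N$) and the transport cost ($|F(x)-F(y)|=|x-y|$), hence also the $p$-Wasserstein distance; and for $p>1$ both the minimizer in \eqref{eq:wasserstein-densities} and the optimal plan are unique (Theorems~\ref{thm:existence-W_p} and~\ref{thm:existence-uniqueness-transport}). For item (i): from $F_{\#}(\rho\L^N)=\rho\L^N$ one gets $\rho\circ F^{-1}=\rho$ a.e., so the density of $F_{\#}(\eta_\rho\L^N)$ is $\eta_\rho\circ F^{-1}$, which still lies in $\AA$ and satisfies $\rho+\eta_\rho\circ F^{-1}=(\rho+\eta_\rho)\circ F^{-1}\le 1$; thus it is admissible in \eqref{eq:wasserstein-densities}. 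Pushing forward any competitor plan between $\rho\L^N$ and $\eta_\rho\L^N$ by $\tilde F$ gives a plan between $\rho\L^N$ and $F_{\#}(\eta_\rho\L^N)$ with the same cost, so $W_p(\rho,F_{\#}\eta_\rho)=W_p(\rho,\eta_\rho)=\W_p(\rho)$; hence $F_{\#}(\eta_\rho\L^N)$ is also a minimizer, and uniqueness forces $F_{\#}(\eta_\rho\L^N)=\eta_\rho\L^N$. Then $\tilde F_{\#}\pi_\rho$ is a plan from $\rho\L^N$ to $\eta_\rho\L^N$ with the same (optimal) cost as $\pi_\rho$, hence optimal, hence equal to $\pi_\rho$ by uniqueness of the optimal plan.

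For item (ii), write $H^{+}=\{\scal{x}{\nu}>0\}$, $H^{-}=\{\scal{x}{\nu}<0\}$ and set $\pi^{+}\coloneqq\pi_\rho\res(H^{+}\times H^{-})$ and $\pi^{-}\coloneqq\pi_\rho\res(H^{-}\times H^{+})$, so that the left-hand side of \eqref{eq:split-hyperplane} equals $(\pi^{+}+\pi^{-})(\R^N\times\R^N)$. Since $\tilde F$ exchanges $H^{+}\times H^{-}$ with $H^{-}\times H^{+}$, item (i) yields $\pi^{-}=\tilde F_{\#}\pi^{+}$; in particular it suffices to show $\pi^{+}=0$. Assuming not, I would introduce the competitor
\[
\pi'\coloneqq\pi_\rho-\pi^{+}-\pi^{-}+G_{\#}\pi^{+}+G'_{\#}\pi^{+},\qquad G(x,y)=(x,F(y)),\ \ G'(x,y)=(F(x),y).
\]
Since $\pi^{+}$ and $\pi^{-}$ are restrictions of $\pi_\rho$ to disjoint sets, $\pi_\rho-\pi^{+}-\pi^{-}\ge 0$, so $\pi'\ge 0$; using $\pi^{-}=\tilde F_{\#}\pi^{+}$ together with $p_i\circ\tilde F=F\circ p_i$, one checks that the added and removed pieces contribute nothing net to either marginal, so $\pi'$ is again a transport plan between $\rho\L^N$ and $\eta_\rho\L^N$.

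It then remains to compare costs. From $|F(x)-F(y)|=|x-y|$ and $|F(x)-y|=|x-F(y)|$ one obtains $\int|x-y|^p\,d\pi'=\W_p^p(\rho)-2\int\left(|x-y|^p-|x-F(y)|^p\right)d\pi^{+}$. The elementary identity $|x-F(y)|^2=|x-y|^2+4\scal{x}{\nu}\scal{y}{\nu}$ shows $|x-F(y)|<|x-y|$ everywhere on $H^{+}\times H^{-}$, so the integrand above is strictly positive on $\spt\pi^{+}$; if $\pi^{+}\ne 0$ this gives $\int|x-y|^p\,d\pi'<\W_p^p(\rho)=W_p^p(\rho,\eta_\rho)$, contradicting the optimality of the Wasserstein distance between the fixed marginals $\rho\L^N$ and $\eta_\rho\L^N$. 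Hence $\pi^{+}=0$, so $\pi^{-}=\tilde F_{\#}\pi^{+}=0$, and \eqref{eq:split-hyperplane} follows.

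The routine-looking but genuinely load-bearing step is the marginal bookkeeping for $\pi'$ in item (ii): subtracting $\pi^{+}+\pi^{-}$ and adding back $G_{\#}\pi^{+}+G'_{\#}\pi^{+}$ is marginal-neutral precisely because $\pi^{-}$ is the $\tilde F$-reflection of $\pi^{+}$, so the masses of the relevant ``corner'' marginals on $H^{+}$ and $H^{-}$ match up; without the symmetry provided by item (i), a naive uncrossing would alter the target density and could violate the constraint $\rho+\rho'\le 1$. Everything else reduces to isometry-invariance of $W_p$ and the uniqueness statements already recorded.
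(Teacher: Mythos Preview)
Your proof is correct and follows essentially the same route as the paper. In particular, your competitor $\pi'$ in item (ii) coincides with the paper's reflected plan $\tilde\pi_\rho=\pi_\rho\res(H_1\times H_1)+\pi_\rho\res(H_2\times H_2)+(\Id,F)_{\#}(\pi_\rho\res(H_1\times H_2))+(\Id,F)_{\#}(\pi_\rho\res(H_2\times H_1))$, since $G_{\#}\pi^-=(G\circ\tilde F)_{\#}\pi^+=G'_{\#}\pi^+$; you simply exploit the identity $\pi^-=\tilde F_{\#}\pi^+$ from (i) up front to express everything in terms of $\pi^+$, which streamlines the marginal bookkeeping slightly, but the underlying uncrossing-plus-uniqueness argument is identical.
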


\begin{proof}
    We recall that the optimal plan $\pi_{\rho}$ is unique (see Theorem~\ref{thm:existence-uniqueness-transport}). Also, notice that $F_{\#}(\rho\L^N)$ and $F_{\#}(\eta_{\rho}\L^N)$ are absolutely continuous with respect to the Lebesgue measure, and we have that $F_{\#}(\rho\L^N) = (\rho\circ F)\L^N$ and $F_{\#}(\eta_{\rho}\L^N) = (\eta_{\rho}\circ F)\L^N$. Therefore, it is trivial to see that $F_{\#}(\rho\L^N)\in\AA$, $F_{\#}(\eta_{\rho}\L^N)\in\AA$ and $F_{\#}((\rho+\eta_{\rho})\L^N)\leq \L^N$.
    
    It is easy to see that $\tilde \pi_{\rho} = (\tilde F)_{\#}\pi_{\rho}$ is a transport plan associated to $F_{\#}(\rho\L^N)$ and $F_{\#}(\eta_{\rho}\L^N)$: by the properties of the push forward, we have that $(p_1\circ \tilde F)_{\#}\pi_{\rho} = (p_1)_{\#}(\tilde F_{\#}\pi_{\rho})$, and $p_1\circ \tilde F = F\circ p_1$, therefore $(p_1)_{\#}\tilde \pi_{\rho} = F_{\#}(\rho\L^N)$. An analogous property holds for the second projection $p_2$, and thus $\tilde\pi_{\rho}$ has the correct marginals. Then, we consider the plan $(\pi_{\rho}+\tilde \pi_{\rho})/2$, whose marginals are $\rho\L^N$ and $\frac{1}{2}(\eta_{\rho}+\eta_{\rho}\circ F)\L^N$, and we observe that
    \begin{equation*}
    \begin{split}
        \W_p^p(\rho) &\leq \frac{1}{2}\int |x-y|^p\,d\pi_{\rho}(x,y)+\frac{1}{2}\int|x-y|^p \,d\tilde F_{\#}\pi_{\rho}(x,y)\\
        &= \frac{1}{2}\int |x-y|^p\,d\pi_{\rho}(x,y)+\frac{1}{2}\int|F(x)-F(y)|^p \,d\pi_{\rho}(x,y) = W_p^p(\rho,\eta_{\rho}),
    \end{split}
    \end{equation*}
    where we used that $F$ is an isometry to obtain the last identity. This implies that $\eta_{\rho}\circ F$ is also an optimal density to compute $\W_p(\rho)$. Since there exists a unique density which realizes $\W_p(\rho)$, then $\eta_{\rho}\L^N = F_{\#}(\eta_{\rho}\L^N)$ and $\tilde F_{\#}\pi_{\rho}=\pi_{\rho}$.

    In order to prove (ii), suppose that $F(x)=x-2\scal{x}{\nu}\nu$ for some $\nu\in\S^{N-1}$. 
    From the previous point we know that $\pi_{\rho}$ satisfies $\tilde F_{\#}\pi_{\rho} = \pi_{\rho}$. We want to prove that, whenever \eqref{eq:split-hyperplane} does not hold, we can find a better plan, contradicting the definition of $\pi_{\rho}$. In fact, we consider the plan
    \[
        \tilde \pi_{\rho} = \pi_{\rho}\res (H_1\times H_1)+\pi_{\rho}\res(H_2\times H_2)+(\Id,F)_{\#}(\pi_{\rho}\res (H_1\times H_2))+(\Id,F)_{\#}(\pi_{\rho}\res (H_2\times H_1)),
    \]
    where $H_1=\{x\colon\scal{x}{\nu}>0\}$ and $H_2=F(H_1)=\{x\colon\scal{x}{\nu}<0\}$. We observe that, since $(p_1)_{\#}\pi_{\rho}$ and $(p_2)_{\#}\pi_{\rho}$ are absolutely continuous with respect to Lebesgue measure, then $\pi_{\rho}$ does not give mass to $\bdry (H_i\times H_j)$ for any $i,j\in\{1,2\}$. Therefore, $\tilde \pi_{\rho}$ is a probability measure, and the well-known properties of the push-forward operation guarantee that $(p_1)_{\#}\tilde\pi_{\rho} = \rho\L^N$. 
    Since $\pi_{\rho}=\tilde F_{\#}\pi_{\rho}$ and $\tilde F(H_1\times H_2) = H_2\times H_1$, then $\pi_{\rho}\res (H_1\times H_2) = \tilde F_{\#}(\pi_{\rho}\res(H_2\times H_1))$. With this observation we arrive to
    \begin{align*}
        \left((p_2)_{\#}\tilde\pi_{\rho}\right)\res H_1 &= (p_2)_{\#}\left(\pi_{\rho}\res(H_1\times H_1)\right)+(p_2)_{\#}\left((\Id,F)_{\#}(\pi_{\rho}\res(H_1\times H_2))\right)\\
        &=(p_2)_{\#}\left(\pi_{\rho}\res(H_1\times H_1)\right)+(p_2)_{\#}\left((\Id,F)_{\#}\tilde F_{\#}(\pi_{\rho}\res(H_2\times H_1))\right)\\
        &=(p_2)_{\#}\left(\pi_{\rho}\res(H_1\times H_1)\right)+(p_2)_{\#}\left((F,\Id)_{\#}(\pi_{\rho}\res(H_2\times H_1))\right)\\
        &=(p_2)_{\#}\left(\pi_{\rho}\res(H_1\times H_1)\right)+(p_2)_{\#}\left(\pi_{\rho}\res(H_2\times H_1)\right)\\
        &=(p_2)_{\#}(\pi_{\rho}\res (\R^N\times H_1)) = ((p_2)_{\#}\pi_{\rho})\res H_1,
    \end{align*}
    where we used that $(\Id,F)\circ \tilde F = (F,\Id)$ and the fact that $F$ is an isometry to pass from the second to the third line. Arguing in the same way, one can also see that $((p_2)_{\#}\tilde \pi_{\rho})\res H_2 = ((p_2)_{\#}\pi_{\rho})\res H_2$. This is sufficient to say that $\rho\L^N+(p_2)_{\#}(\tilde \pi_{\rho}\L^N)\leq \L^N$, and thus $\tilde \pi_{\rho}\in\AP_{\rho}$. Now we can compare the costs associated to $\tilde \pi_{\rho}$ and $\pi_{\rho}$. Discarding the common terms, we get that
    \begin{equation}\label{eq:reflection-inequality}
        \int |x-y|^p \, d(\tilde \pi_{\rho}-\pi_{\rho}) = \int_{(H_1\times H_2)\cup (H_2\times H_1)} \left(|x-F(y)|^p-|x-y|^p\right)\,d\pi_{\rho}(x,y),
    \end{equation}
    and a simple geometric argument shows that the function inside the integral is strictly negative. Therefore, if the domain appearing in the right hand side of \eqref{eq:reflection-inequality} has positive $\pi_{\rho}$ measure, then $\tilde \pi_{\rho}$ is a strictly better competitor to compute $\W_p(\rho)$, in contradiction with the definition of $\pi_{\rho}$. To conclude, we observe that we have just proved that $\pi_{\rho}((H_1\times H_2)\cup (H_2\times H_1))=0$, and this is equivalent to \eqref{eq:split-hyperplane}.
\end{proof}

\medskip
\section{Maximizer of $\W_p$}\label{sec:maximizer}

\subsection{Existence of maximizers}
In this section we first prove the existence of maximizers of the energies \eqref{eq:wasserstein-densities} in $\AA$ by applying the concentration compactness principle to a maximizing sequence of densities, where we consider them as measures. Even though we consider a maximization problem, our strategy works since $\W_p$ is continuous with respect to the weak$*$ convergence, as shown in Lemma~\ref{lem:weak-continuity-wasserstein}. Here we state concentration compactness lemma for measures for the convenience of the reader.

\begin{lemma}[Concentration compactness, \cite{S}]\label{lemma:concentration-compactness}
    Let $\mu_n\in\P(\R^N)$ be a given sequence of probability measures. Then there exists a subsequence (not relabelled) such that one of the following holds:
    \begin{enumerate}
        \item \emph{(Compactness)} There exists a sequence of points $x_n\in\R^N$ such that, for every $\eps>0$, there exists $L>0$ large enough such that $\mu_n(Q_L(x_n))>1-\eps$.
        \item \emph{(Vanishing)} For every $\eps>0$ and every $L>0$ there exists $\bar n\in\N$ such that
        \[
            \mu_n(Q_L(x))<\eps\qquad \forall x\in\R^N, \forall n>\bar n.
        \]
        \item \emph{(Dichotomy)} There exist $\lambda \in(0,1)$ and a sequence of points $x_n\in\R^N$ with the following property: for any $\eps>0$, there exists $L>0$ such that, for any $L'>L$  there exist two non-negative measures $\mu^1_n$ and $\mu^2_n$ that satisfy, for every $n$ large enough, the following conditions
        \begin{equation*}
        \begin{split}
            &\mu^1_n+\mu^2_n\leq \mu_n,\\
            &\spt \mu^1_n\subset Q_L(x_n),\quad \spt \mu^2_n\subset \R^N\setminus Q_{L'}(x_n),\\
            &\left|\mu^1_n(\R^N)-\lambda\right|+\left|\mu^2_n(\R^N)-(1-\lambda)\right|<\eps.
        \end{split}
        \end{equation*}
    \end{enumerate}
\end{lemma}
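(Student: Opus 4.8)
The plan is to reduce the entire statement to the behaviour of the \emph{concentration functions}
\[
    Q_n(t)\coloneqq\sup_{x\in\R^N}\mu_n\big(Q_t(x)\big),\qquad t\geq 0 ,
\]
each of which is nondecreasing in $t$, takes values in $[0,1]$, and satisfies $\lim_{t\to\infty}Q_n(t)=\mu_n(\R^N)=1$ because $\mu_n\in\P(\R^N)$. By Helly's selection theorem, after passing to a subsequence (not relabelled) there is a nondecreasing function $Q\colon[0,\infty)\to[0,1]$ with $Q_n(t)\to Q(t)$ at every continuity point of $Q$; since $Q$ has at most countably many jumps we may, and will, choose every auxiliary radius below to be a continuity point of $Q$. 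Put
\[
    \lambda\coloneqq\lim_{t\to\infty}Q(t)=\sup_{t>0}Q(t)\in[0,1] ,
\]
so that the three alternatives correspond to $\lambda=0$, $\lambda=1$, and $0<\lambda<1$. If $\lambda=0$ then $Q\equiv0$, hence for every fixed $L$ we have $\sup_x\mu_n(Q_L(x))=Q_n(L)\to 0$, which is precisely the vanishing alternative.

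\emph{Compactness ($\lambda=1$).} For each $k\in\N$ fix $t_k$ with $Q(t_k)>1-1/k$; then for $n$ large $Q_n(t_k)>1-1/k$, so there are centres $x_n^{(k)}$ with $\mu_n(Q_{t_k}(x_n^{(k)}))>1-1/k$. Set $x_n\coloneqq x_n^{(3)}$. Since any two Borel sets of $\mu_n$-mass $>1/2$ must intersect, for $k\geq3$ the cubes $Q_{t_3}(x_n)$ and $Q_{t_k}(x_n^{(k)})$ overlap, whence $\|x_n-x_n^{(k)}\|_\infty\le t_3+t_k$ and therefore $Q_{t_k}(x_n^{(k)})\subset Q_{L_k}(x_n)$ with $L_k\coloneqq t_3+2t_k$. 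Thus $\mu_n(Q_{L_k}(x_n))>1-1/k$ for $n$ large, which is the compactness alternative for the single sequence $(x_n)$.

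\emph{Dichotomy ($0<\lambda<1$).} Given $\eps>0$, choose $R$ with $Q(R)>\lambda-\eps$ and, for $n$ large, a centre $x_n$ with $\mu_n(Q_R(x_n))>\lambda-\eps$. For any $L'>R$ we have $\mu_n(Q_{L'}(x_n))\le Q_n(L')\to Q(L')\le\lambda$, so $\mu_n(Q_{L'}(x_n))<\lambda+\eps$ for $n$ large. Then
\[
    \mu^1_n\coloneqq\mu_n\res Q_R(x_n),\qquad \mu^2_n\coloneqq\mu_n\res\big(\R^N\setminus Q_{L'}(x_n)\big)
\]
satisfy $\mu^1_n+\mu^2_n\le\mu_n$ (the annulus $Q_{L'}(x_n)\setminus Q_R(x_n)$ is discarded), have the required supports with $L=R$, and obey $\mu^1_n(\R^N)\in(\lambda-\eps,\lambda+\eps)$ together with $\mu^2_n(\R^N)=1-\mu_n(Q_{L'}(x_n))\in(1-\lambda-\eps,1-\lambda+\eps)$; since $\eps$ is arbitrary this is the dichotomy alternative.

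The step I expect to be the main obstacle is purely one of bookkeeping in the dichotomy case: the alternative requires one sequence $(x_n)$ valid for \emph{every} $\eps$ at once, whereas the construction above a priori yields an $\eps$-dependent centre, and — in contrast with the compactness case — the masses $\lambda-\eps$ need not exceed $1/2$, so near-optimal centres at different scales cannot be forced to coincide by the intersection trick. This is fixed by a diagonal extraction: run the construction along $\eps_m=1/m$, with radii $R_m\uparrow\infty$ and thresholds $N_m\uparrow\infty$, obtain centres $y_n^{(m)}$ for $n\ge N_m$, and set $x_n\coloneqq y_n^{(m(n))}$ where $m(n)\coloneqq\max\{m:N_m\le n\}\to\infty$; after one further subsequence in $n$ one checks that $\lim_{L\to\infty}\liminf_n\mu_n(Q_L(x_n))=\lambda$ (the bound $\le\lambda$ being automatic since $\mu_n(Q_L(x_n))\le Q_n(L)$), which is exactly what the displayed construction uses. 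All statements are insensitive to replacing the cubes $Q_t$ by balls, so the choice is one of convenience only.
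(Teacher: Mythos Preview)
The paper does not supply its own proof of this lemma; it is quoted from the literature (reference \cite{S}) for use in Theorem~\ref{thm:existence-maximizers}. Your route via the L\'evy concentration functions $Q_n$ and Helly's selection theorem is the classical one, and the vanishing and compactness alternatives are handled correctly---the intersection trick for pinning down a single centre in the compactness case is clean and standard.

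You are also right that the delicate point is producing a \emph{single} sequence $(x_n)$ in the dichotomy case, but the diagonal extraction you sketch does not actually close the gap. The centre $y_n^{(m(n))}$ near-maximizes $\mu_n$ over cubes of the \emph{growing} radius $R_{m(n)}$, and nothing forces that mass to sit near the centre rather than being smeared over the whole cube; for fixed $L\ll R_{m(n)}$ one may then have $\mu_n(Q_L(x_n))\to 0$, and no further subsequence in $n$ repairs this. A concrete obstruction: take $\mu_n=\tfrac12\,\mathrm{Unif}([-a_n,a_n])+\tfrac12\,\delta_{b_n}$ with $a_n\to\infty$ and $b_n-a_n\to\infty$, and arrange the diagonal so that $R_{m(n)}\ge a_n$; then $y_n^{(m(n))}=0$ is a legitimate near-maximizer (mass $\tfrac12$ in $Q_{R_{m(n)}}(0)$), yet $\mu_n(Q_L(0))=L/(2a_n)\to 0$ for every fixed $L$. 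A clean remedy is to drop the diagonal: fix one scale $R_0$ with $Q(R_0)>0$, choose $x_n$ to near-maximize $\mu_n(Q_{R_0}(\cdot))$, apply Helly a second time to $\beta_n(L):=\mu_n(Q_L(x_n))$, and take the $\lambda$ of the statement to be $\lim_{L\to\infty}\beta(L)$, which then lies in $(0,1)$ (it is bounded below by $Q(R_0)>0$ and above by $\lim_LQ(L)<1$), though it may be strictly smaller than $\lim_LQ(L)$. For the paper's purposes the point is in any case moot: the proof of Theorem~\ref{thm:existence-maximizers} invokes dichotomy only for one $\eps$ at a time, so $\eps$-dependent centres would already suffice.
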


\bigskip

\begin{theorem}\label{thm:existence-maximizers}
    Let $p>1$ be fixed. Then there exists a maximizer of $\W_p$ in $\AA$.
\end{theorem}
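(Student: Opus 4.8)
The plan is to use the direct method together with the concentration compactness alternative of Lemma~\ref{lemma:concentration-compactness}. Set $S\coloneqq\sup_{\rho\in\AA}\W_p(\rho)$. This supremum is finite by Corollary~\ref{cor:bounded-transport-distance} (which bounds $\W_p^p(\rho)$ by a dimensional constant for $\rho\in\AA$) and strictly positive: taking $\rho=\Chi{B}$ with $|B|=1$, every admissible competitor $\rho'$ must vanish on $B$, so a fixed fraction of the mass of $B$ must be moved out of $B$ over a fixed distance, whence $\W_p(\Chi{B})>0$. I would pick a maximizing sequence $\{\rho_n\}\subset\AA$, regard the densities as probability measures $\mu_n=\rho_n\L^N$, and apply Lemma~\ref{lemma:concentration-compactness} to a subsequence. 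It then remains to rule out vanishing and dichotomy, so that the compactness alternative must hold.

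\emph{Vanishing.} If $\{\mu_n\}$ vanishes, then for any fixed side length $\ell>0$ the mass $\int_Q\rho_n$ is eventually smaller than $\ell^N/3$ in every cube $Q$ of a grid of side $\ell$. On each such cube I redistribute this mass into the available room by setting $\rho_n'\coloneqq\frac{m_Q}{\ell^N-m_Q}(1-\rho_n)$ on $Q$, with $m_Q\coloneqq\int_Q\rho_n$; one checks $0\le\rho_n+\rho_n'\le1$ and $\int_Q\rho_n'=m_Q$, so $\rho_n'$ is admissible in \eqref{eq:wasserstein-densities}, and transporting $\rho_n$ to $\rho_n'$ cube by cube yields $\W_p(\rho_n)\le\sqrt N\,\ell$. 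Since $\ell$ was arbitrary, $\W_p(\rho_n)\to0<S$, a contradiction.

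\emph{Dichotomy.} By the scaling identity of Remark~\ref{rem:scaling-wasserstein}, $S(m)\coloneqq\sup_{\rho\in\AA_m}\W_p(\rho)$ satisfies $S(m)^p=m^{1+p/N}S(1)^p$; since $1+p/N>1$ this gives the strict binding inequality $S(\lambda)^p+S(1-\lambda)^p=\big(\lambda^{1+p/N}+(1-\lambda)^{1+p/N}\big)S(1)^p<S(1)^p$ for every $\lambda\in(0,1)$. Now suppose dichotomy holds with ratio $\lambda$. For $\eps$ small and $n$ large I would split $\rho_n=\sigma_n^1+\sigma_n^2$ along a cube $Q_{c_n}(x_n)$ whose radius $c_n$ lies in the low-mass annulus produced by the alternative, chosen (using that this annulus may be taken arbitrarily wide) so that the shell of width $2C_N$ around $\partial Q_{c_n}(x_n)$ carries $\mu_n$-mass $O(\eps)$; here $C_N$ is the transport-distance bound of Corollary~\ref{cor:bounded-transport-distance}. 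Since no optimal plan moves mass farther than $C_N$, the supports of $\eta_{\sigma_n^1}$ and $\eta_{\sigma_n^2}$ can meet only inside that thin shell and carry mass $O(\eps)$ there, so $\eta_{\sigma_n^1}+\eta_{\sigma_n^2}$ — after correcting an $O(\eps)$ overflow by relocating it into the abundant empty space near the shell at bounded cost — is an admissible competitor for $\rho_n$. This yields $\W_p^p(\rho_n)\le\W_p^p(\sigma_n^1)+\W_p^p(\sigma_n^2)+O(\eps)\le S(\|\sigma_n^1\|_1)^p+S(\|\sigma_n^2\|_1)^p+O(\eps)$; letting $n\to\infty$ and then $\eps\to0$, and using $\|\sigma_n^1\|_1\to\lambda$, $\|\sigma_n^2\|_1\to1-\lambda$ together with continuity of $S(\cdot)$, one contradicts $\W_p^p(\rho_n)\to S(1)^p$ via the strict binding inequality.

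\emph{Compactness and conclusion.} Hence there are points $x_n$ such that $\tilde\rho_n\coloneqq\rho_n(\cdot+x_n)$ is tight; passing to a subsequence, $\tilde\rho_n\weakstar\rho$ for some $\rho$, and $\rho\in\AA$ because the constraints $0\le\rho\le1$ are stable under weak$*$ convergence and tightness forbids loss of mass. Since $\W_p$ is translation invariant, $\W_p(\tilde\rho_n)=\W_p(\rho_n)\to S$, and Lemma~\ref{lem:weak-continuity-wasserstein} gives $\W_p(\rho)=\lim_n\W_p(\tilde\rho_n)=S$, so $\rho$ is a maximizer. The delicate step is the dichotomy case: decoupling the two far-apart pieces while preserving the pointwise constraint $\rho_n+\eta\le1$ (controlling the overlaps of $\eta_{\sigma_n^1}$, $\eta_{\sigma_n^2}$ and the mass in the separating shell) is where the quantitative estimate of Corollary~\ref{cor:bounded-transport-distance} is essential; the vanishing and compactness cases are comparatively routine.
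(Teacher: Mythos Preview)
Your proposal is correct and follows essentially the same route as the paper: concentration compactness applied to a maximizing sequence, with vanishing ruled out by a local redistribution within small cubes, dichotomy ruled out via the scaling identity $S(m)^p=m^{1+p/N}S(1)^p$ and strict convexity of $t\mapsto t^{1+p/N}$, and the compactness alternative handled by translation invariance together with the weak$*$ continuity of $\W_p$ from Lemma~\ref{lem:weak-continuity-wasserstein}. The only organizational difference is in the dichotomy bookkeeping---the paper separates the two pieces by a gap of width $3C_N$ to get an exact splitting $\W_p^p(\rho_n^1+\rho_n^2)=\W_p^p(\rho_n^1)+\W_p^p(\rho_n^2)$ and then handles the leftover $O(\eps)$ mass, whereas you cut $\rho_n$ exactly in two and repair the $O(\eps)$ overlap of $\eta_{\sigma_n^1}$ and $\eta_{\sigma_n^2}$---but both routes rely on Corollary~\ref{cor:bounded-transport-distance} in the same essential way and yield the same estimate.
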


\begin{proof}
    Let us consider a maximizing sequence $\rho_n\in\AA$ with $\W_p(\rho_n)\to \sup_{\rho\in\AA}\W_p(\rho)$. Notice that, thanks to Corollary~\ref{cor:bounded-transport-distance}, we have that $\sup_{\rho\in\AA}\W_p(\rho)\leq C<+\infty$ for some constant $C=C(p,N)$. We are going to apply the concentration compactness lemma to $\mu_n=\rho_n\L^N$, and show that the vanishing and dichotomy phenomena do not happen. Then exploiting the invariance of the energy under translations and Lemma~\ref{lem:weak-continuity-wasserstein} we establish the existence of a maximizer.
    
    We first exclude the vanishing case. Up to translations, we can suppose that the points $x_n$ appearing in Lemma~\ref{lemma:concentration-compactness} all coincide with the origin. Suppose by contradiction that, for any $\eps>0$ and any $L>0$ we can find $\bar n\in\N$ such that $\mu_n(Q_L(x))<(\eps/3)^N$ for every $x\in\R^N$. Then, we fix a partition $\mathcal{F}=\{Q^k\}_{k\in\N}$ of $\R^N$ made of cubes with side length $\eps$. Since by hypothesis $\mu_n(Q^k)<|Q^k|/3$ for every $n>\bar n$ and every $k\in\N$, then for every $n>\bar n$ there exists $\rho'_n\in\AA$ such that $\rho_n+\rho'_n\leq 1$ and
    \[
        \int_{Q^k}\rho_n \,dx = \int_{Q^k}\rho'_n\,dx \qquad \forall k\in\N.
    \]
    Using a transport plan similar to $\pi_n$ defined in \eqref{eq:plan-lower-semicontinuity}, it is immediate to see that
    \[
        \W_p^p(\rho_n) \leq W_p^p(\rho_n,\rho'_n) \leq \diam(Q^k)^p = C_{N,p}\eps^p.
    \]
    If we take $\eps$ sufficiently small, we clearly have that $\rho_n$ is not a maximizing sequence for $\W_p$, arriving to a contradiction.

    Now we treat the dichotomy case. Suppose for a contradiction that there exists $\lambda\in(0,1)$ such that, for any $\eps>0$ there exist $\bar n\in\N$, $L>0$ and two sequences of non-negative densities $\rho^1_n$, $\rho^2_n$ that satisfy
    \begin{equation}\label{eq:dichotomy-densities}
    \begin{split}
        &\rho^1_n+\rho^2_n\leq \rho_n\\
        &\spt \rho^1_n\subset Q_L\quad \spt \rho^2_n\subset \R^N\setminus Q_{L+3C_N},\\
        &\left|\int\rho^1_n\,dx -\lambda\right|+\left|\int\rho^2_n\,dx-(1-\lambda)\right|<\eps,
    \end{split}
    \end{equation}
    where $C_N$ is the constant appearing in \eqref{eq:bounded-transport-distance}. 
    
    Since the distance between $\spt \rho^1_n$ and $\spt \rho^2_n$ is larger than $3C_N$, then applying Corollary~\ref{cor:bounded-transport-distance} we obtain that $\W_p^p(\rho^1_n+\rho^2_n) = \W_p^p(\rho^1_n)+\W_p^p(\rho^2_n)$. Combining the first and the third conditions in \eqref{eq:dichotomy-densities}, we get that $\norm{\rho_n-\rho^1_n-\rho^2_n}_1<\eps$, and we define $m^1_n = \norm{\rho^1_n}_1$ and $m^2_n = \norm{\rho^2_n}_1$. Using this fact, and that $\rho^1_n+\rho^2_n+\eta_{\rho^1_n+\rho^2_n}\leq 1$, we deduce that
    \begin{equation}\label{eq:almost-compatible}
        \int (\eta_{\rho^1_n+\rho^2_n}-(1-\rho_n))_+\,dx\leq \eps.
    \end{equation}
    
    We denote by $T_n$ the optimal transport map to compute $\W_p(\rho^1_n+\rho^2_n)$, and we define
    \[
        \zeta_n = \min\{\eta_{\rho^1_n+\rho^2_n},1-\rho_n\},\qquad \tilde\rho_n = (T_n^{-1})_{\#}\zeta_n,
    \]
    so that $\tilde \rho_n$ is an approximation of $\rho^1_n+\rho^2_n$, and it is smaller than that sum. We let $\mathcal{F}=\{Q^k\}_{k\in\N}$ be a partition of $\R^N$ made of cubes with side length equal to $3$, and we can find, as we did before, a density $\zeta'_n$ such that $\rho_n+\zeta_n+\zeta'_n\leq 1$ and
    \[
        \int_{Q^k}\zeta'_n \,dx= \int_{Q^k}\rho_n-\tilde \rho_n \,dx\qquad \forall k\in\N.
    \]
    Therefore, we estimate the energy of $\rho_n$ with the plan
    \[
        \tilde \pi_n = (\Id,T_n)_{\#}\tilde \rho_n + \sum_{k\in\N} \frac{1}{\norm{\zeta'_n\Chi{Q^k}}_1}((\rho_n-\tilde\rho_n)\Chi{Q^k}\L^N)\times (\zeta'_n\Chi{Q^k}\L^N).
    \]
    In fact, combining \eqref{eq:almost-compatible} and the fact that $\norm{\rho_n-\rho^1_n-\rho^2_n}_1\leq \eps$, we have that $\norm{\rho_n-\tilde\rho_n}_1\leq 2\eps$, and thus 
    \begin{equation}\label{eq:dichotomy-estimate}
    \begin{split}
        \W_p^p(\rho_n)&\leq\int|x-y|^p\,d\tilde\pi_n  \leq \int |T_n(x)-x|^p\tilde\rho_n(x)\,dx+2(\diam Q^k)^p \eps\\
        &\leq \W_p^p(\rho^1_n+\rho^2_n)+C_{N,p}\eps\\
        &=\W_p^p(\rho^1_n)+\W_p^p(\rho^2_n)+C_{N,p}\eps\\
        &\leq \sup\left\{\W_p^p(\rho)\colon\rho\in \AA_{m^1_n}\right\}+\sup\left\{\W_p^p(\rho)\colon\rho\in \AA_{m^2_n}\right\}+C_{N,p}\eps.
    \end{split}
    \end{equation}

    Using the rescaling exploited in Remark~\ref{rem:scaling-wasserstein} we see that
        \[
            \sup\left\{\W_p^p(\rho)\colon\rho\in\AA_m\right\} = m^{1+\frac{p}{N}}\sup\left\{\W_p^p(\rho)\colon\rho\in\AA\right\};
        \]
     hence, \eqref{eq:dichotomy-estimate} implies that
    \[
        \W_p^p(\rho_n)\leq C_{N,p}\eps+\left((m^1_n)^{1+\frac{p}{N}}+(m^2_n)^{1+\frac{p}{N}}\right)\sup\left\{\W_p^p(\rho)\colon\rho\in\AA\right\}.
    \]
    If $\eps$ is small enough, this is incompatible with the fact that $\lim_n\W_p(\rho_n) = \sup_{\rho\in\AA}\W_p(\rho)$. In fact, the function $t\mapsto t^{1+\frac{p}{N}}$ is strictly convex, and if $\eps<\frac12\min\{\lambda,1-\lambda\}$, then $m^1_n$ and $m^2_n$ are far away from $0$.
\end{proof}

\bigskip

\subsection{The only maximizer is the ball}
In the second part of this section we will characterize the maximizers of $\W_p$ over $\AA$. In fact, we prove that the only maximizer of $\W_p$ is the characteristic function of a ball (with the correct volume). The intuition behind this result is that, if we have a set, and we create some holes in it (adding some mass somewhere else), we are lowering the energy since the additional mass can be transported at shorter distance. We obtain the main result in several steps: First we study the $1$-dimensional case, possibly with a weight, where the structure of the transport plan is known explicitly. Then, using a symmetrization argument we show that the optimal plan associated to a maximizer has some geometric properties, and, in fact, it is radial. Next, using the $1$-dimensional case, we prove that a maximizer has to be a star-shaped set, and via an optimality argument we deduce that a star-shaped maximizer must actually be a ball.

\begin{prop}\label{prop:maximizer-1D}
    Let $m>0$ be a given parameter. Let $w\colon(0,+\infty)\to(0,+\infty)$ be a non-decreasing weight and let $I=[0,\ell]$ be the unique segment such that $\int_I w\,dx=m$. For any density $\rho\colon\R^+\to [0,1]$ with $\int_{\R^+} \rho w\,dx = m$, we have that
    \begin{equation}\label{eq:1D-ineq}
        \W_p(\Chi{I})\geq \W_p(\rho),
    \end{equation}
    where $\W_p$ is defined in the metric-measure setting with base space $\R^+$ endowed with the usual distance and reference measure equal to $w\L^1$. Equality holds if and only if $\rho = \Chi{I}$ almost everywhere.
\end{prop}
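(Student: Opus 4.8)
The plan is to reduce the one-dimensional problem to an explicit optimal transport computation, exploiting that on $\R^+$ with a monotone weight the optimal plan is monotone (Theorem~\ref{thm:transport-1D}), and then run a rearrangement/competitor argument. First I would set up the structure of any competitor: given $\rho$ with $\int \rho w = m$, by Lemma~\ref{lem:full-ball} the optimal density $\eta_\rho$ saturates the constraint, i.e. $\eta_\rho = (1-\rho)$ on the interval swept out by the transport, and the optimal map $T_\rho$ is monotone (increasing) by Theorem~\ref{thm:transport-1D} since $\rho w \L^1$ is non-atomic. The key structural fact I expect to need is that for the segment $I = [0,\ell]$, the optimal competitor is $\eta_{\Chi I} = \Chi{[\ell,\ell']}$ where $\int_\ell^{\ell'} w = m$: mass is pushed monotonically to the right into the adjacent interval, and because $w$ is non-decreasing the transport distances are as large as possible. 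This gives an explicit value $\W_p(\Chi I)$ against which every other $\rho$ must be compared.

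Next I would compare a general $\rho$ to $\Chi I$. The heuristic from the paper is: holes in the set let mass travel a shorter distance. To make this rigorous I would use the monotone map $T_\rho$ and the constraint $\rho + \eta_\rho \le 1$. The cleanest route is probably a "bathtub"/rearrangement argument: replacing $\rho$ by its monotone rearrangement toward the origin (the density $\Chi{[0,\ell]}$ when $\rho = \Chi E$; more generally the non-increasing rearrangement $\rho^*$ supported on $[0,\ell']$ with the same $w$-mass) should not decrease $\W_p$. One shows this by transporting along the monotone map and checking, via the monotonicity of $w$, that the saturated competitor $\eta_\rho$ can only be "worse" (closer) than the one for the rearranged density. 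Then among monotone rearrangements one checks directly that $\Chi I$ (the "most concentrated" one, with density identically $1$ on $[0,\ell]$) maximizes; this last step is a one-variable computation since everything is explicit. Alternatively, and perhaps more in the spirit of the rest of the paper, one can argue directly: let $T_\rho$ be the optimal monotone map, and build from it a competitor for $\Chi I$ by "filling in the holes of $\rho$" and correspondingly relocating the target mass inward toward $I$; the monotonicity of $w$ ensures the relocated plan has cost at least $\W_p^p(\rho)$, while it is an admissible plan for $\Chi I$ — wait, this inequality goes the wrong way, so in fact one wants the reverse: take the optimal plan for $\Chi I$ and dilute it to a competitor for $\rho$, using that wherever $\rho < 1$ there is extra room closer in, showing $\W_p(\rho) \le \W_p(\Chi I)$.

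The equality case is where I expect the main obstacle, and it forces the correct direction of the argument above. The strict monotonicity "$w$ non-decreasing, not necessarily strictly" means we cannot get strictness from $w$ alone; strictness must come from the strict convexity of $t \mapsto t^p$ (as in Lemma~\ref{lemma:symmetry-transport}) together with the saturation from Lemma~\ref{lem:full-ball}. Concretely, if $\rho \ne \Chi I$ on a set of positive measure, then $\rho$ has a "hole" (a set where $\rho < 1$) at distance closer to the origin than some mass it must transport; rerouting an infinitesimal amount of the outgoing mass into that hole strictly decreases the $p$-cost because the rerouted segment is strictly shorter and $p > 1$. I would formalize this exactly as in the proof of Lemma~\ref{lem:full-ball}: pick $(x,y) \in \spt \pi_\rho$ with $y > x$, use that $\eta_\rho$ saturates $(1-\rho)$ on $B_{|y-x|}(x)$, and observe that if $\rho$ were not already $\equiv 1$ to the left, the saturation set would have contained room to make the ball condition fail — chasing this gives that $\rho = \Chi{[0,\ell]}$ is forced. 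I would close the argument by assembling the inequality $\W_p(\rho) \le \W_p(\Chi I)$ with equality iff $\rho = \Chi I$ a.e., which is exactly \eqref{eq:1D-ineq}.
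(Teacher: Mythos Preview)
Your proposal has a genuine gap: none of the competitor constructions you sketch actually works, and you are missing the key technical device the paper uses. First, you assume the optimal map $T_\rho$ for $\W_p(\rho)$ is monotone \emph{increasing} (i.e.\ moves mass to the right), but this is false in general on $\R^+$: mass can be sent toward the origin if there is room there (cf.\ Figure~\ref{fig:segments}). So ``the optimal map is monotone by Theorem~\ref{thm:transport-1D}'' is true but does not give the orientation you need. Second, ``take the optimal plan for $\Chi_I$ and dilute it to a competitor for $\rho$'' cannot be made sense of: that plan has first marginal $\Chi_I w\L^1$, not $\rho w\L^1$, and there is no canonical way to redistribute it onto $\spt\rho$ while keeping the cost below $\W_p^p(\Chi_I)$. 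The rearrangement idea has the same defect --- the non-increasing rearrangement of a general $\rho$ is not $\Chi_I$, so you still owe a second (non-trivial) step comparing all non-increasing densities, which your outline does not supply.

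What the paper does instead is: (i) introduce an auxiliary functional $\AW_p(\rho)$ that admits only plans moving mass to the right, so that $\AW_p(\Chi_I)=\W_p(\Chi_I)$ while $\AW_p(\rho)\ge\W_p(\rho)$; (ii) pass to \emph{volume coordinates} $v=V_\rho(x)=\int_0^x\rho w$, so that the monotone right-moving map becomes a transport distance function $d_\rho(v)$; and (iii) prove the pointwise comparison $d_\rho(v)\le d(v)$ for all $v\in[0,m]$, using the saturation of Lemma~\ref{lem:full-ball}, the inequality $V_\rho^{-1}\ge V^{-1}$ (since $\rho\le1$), and the monotonicity of $w$. Integrating $d_\rho^p\le d^p$ over $[0,m]$ gives the inequality immediately, and tracking the equality cases shows $\rho=\Chi_I$ is forced. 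Your heuristic ``holes let mass travel a shorter distance'' is exactly the content of $d_\rho\le d$, but you need the auxiliary functional and the volume parametrization to turn it into a proof.
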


\begin{proof}
    We note that, also in this weighted case, the transport distance is bounded (using again Lemma~\ref{lem:full-ball}), and thus for any density the infimum in the definition of $\W_p$ is achieved thanks to Theorem~\ref{thm:existence-uniqueness-transport} and Theorem~\ref{thm:existence-W_p}. Therefore, there exists $\eta_{\rho}$ such that $\W_p(\rho) = W_p(\rho\gamma,\eta_{\rho}\gamma)$, where we use the notation $\gamma=w\L^1$. Moreover, since the cost increases with the distance, we have that $\W_p(\Chi{I}) = W_p(\Chi{I}\gamma,\Chi{I'}\gamma)$, where $I' = [\ell,\ell']$ for some $\ell'>\ell$, and the transport plan is induced by a monotone map $T$ (see Theorem~\ref{thm:transport-1D}).
    
    Now we introduce an auxiliary problem that produces a non-optimal candidate to estimate $\W_p(\rho)$. The advantage of this modified problem is that it enforces a geometric constraint that clarifies some arguments. The auxiliary functional, which considers only plans which move mass to the right, is given by
    \begin{equation*}
        \AW_p^p(\rho)\coloneqq \inf\left\{\int |x-y|^p\,d\pi(x,y)\colon \pi\in\AP_{\rho}, \ \pi\big(\left\{(x,y)\colon y<x\right\}\big) = 0\right\}.
    \end{equation*}
    
    We observe that the infimum is actually a minimum since the additional constraint is closed under weak$*$ convergence. Moreover, applying the standard results for the one dimensional transport problem, we know that the optimal plan is induced by a non-decreasing map. Since we have already observed that $\W_p(\Chi{I}) = W_p(\Chi{I}\gamma,\Chi{I'}\gamma)$, the monotonicity of the optimal map ensures that $\AW_p(\Chi{I})=\W_p(\Chi{I})$. For a general density $\rho$, on the other hand, we have the inequality $\AW_p(\rho)\geq \W_p(\rho)$ due to the introduction of the additional constraint. With these observations, we reduce to proving the following (stronger) inequality:
    \[
        \W_p(\Chi{I})\geq \AW_p(\rho),
    \]
   and \eqref{eq:1D-ineq} simply follows.
 
   From now on we denote by $\tilde T_{\rho}$ the transport map appearing when we compute $\AW_p(\rho)$.
    We define the following ``volume'' functions on  $\R^+$:
    \[
        V(x) \coloneqq \int_0^xw(t)\,dt, \qquad V_{\rho}(x) \coloneqq \int_0^x\rho(t)w(t)\,dt.
    \]
    We also denote by $d(v)$ (resp. $d_{\rho}(v)$) the transport distance of the point $V^{-1}(v)$ (resp. $V_{\rho}^{-1}(v)$) when we compute $\W_p(\Chi{I})$ (resp. $\W_p(\rho)$), i.e.
    \begin{equation}\label{eq:d}
        d(v)\coloneqq |T(V^{-1}(v))-V^{-1}(v)|,\qquad d_{\rho}(v)\coloneqq |\tilde T_{\rho}(V_{\rho}^{-1}(v))-V_{\rho}^{-1}(v)|.
    \end{equation}
    Using the explicit expression of the optimal transport map in $1$D (see for example \cite[Remarks 2.19 (iv)]{V2003}), we have that
    \[
        \gamma([V^{-1}(v),V^{-1}(v)+d(v)]) = m\qquad \forall v\in[0,m].
    \]
    One can easily adapt the proof of Lemma~\ref{lem:full-ball} to the auxiliary functional and see that, if $x$ is a Lebesgue point for $\tilde T_{\rho}$ and $r=|\tilde T_{\rho}(x)-x|$, then $(\tilde T_{\rho})_{\#}(\rho\gamma) = (1-\rho)\gamma$ in $[x,x+r]$. Moreover, since $\tilde T_{\rho}$ is non-decreasing, we also have that
    \begin{equation}\label{eq:constraint-saturation}
        (\tilde T_{\rho})_{\#}\left(\rho\gamma\res[0,x]\right) = (1-\rho)\gamma\qquad \text{in }[x,x+r].
    \end{equation}
    
    We claim that $d_{\rho}\leq d$. In fact, suppose for contradiction that there exists $v\in(0,m)$ such that $d_{\rho}(v)>d(v)$. Since $\rho\leq1$ we have $V_{\rho}^{-1}\ge V^{-1}\ge 0$, and thus 
    \begin{equation*}
    \begin{split}
        \int_0^{V_{\rho}^{-1}(v)+d_{\rho}(v)}\rho(t)w(t)\,dt 
       &\geq \int_{V^{-1}(v)}^{V^{-1}(v)+d_{\rho}(v)}w(t)\,dt\\
        & > \int_{V^{-1}(v)}^{V^{-1}(v)+d(v)}w(t)\,dt = \gamma([V^{-1}(v),V^{-1}(v)+d(v)]) = m,
    \end{split}
    \end{equation*}
    where we have used \eqref{eq:constraint-saturation} with $x=V_{\rho}^{-1}(v)$ and $r=d_{\rho}(v)$ to get the first inequality, and the monotonicity of $w$ to obtain the second one. This chain of inequalities of course leads to a contradiction since $m=\int_0^\infty \rho \,d\gamma$. Therefore $d_{\rho}\leq d$.

    Since $w$ and $\rho w$ are locally bounded in $[0,+\infty)$, then both $V$ and $V_{\rho}$ are locally Lipschitz, and we can apply the fundamental theorem of calculus: using $v=V_{\rho}(x)$ as variable in the computation of $\AW_p(\rho)$ we obtain that
    \begin{equation*}
    \begin{split}
    	\AW_p^p(\rho) = \int_{\R^+} |\tilde T_{\rho}(x)-x|^p \rho(x)w(x)\,dx = \int_0^m d_{\rho}(v)^p\,dv\leq \int_0^md(v)^p\,dv = \W_p^p(\Chi{I}),
    \end{split}
    \end{equation*}
    where the inequality follows from comparison between $d$ and $d_{\rho}$, and this is the desired inequality. Finally, one can notice that the only way to obtain an equality in the previous chain of inequalities is that $\rho= \Chi{I''}$ for some segment $I''$ and $w$ is constant in $\spt (\rho+T_{\#}\rho)$. However, if $I''\neq I$, then one can construct a better transport plan that moves some mass to the left (this plan should belong to $\AP_{\rho}$, but it is not admissible for the auxiliary problem). Therefore, the equality in \eqref{eq:1D-ineq} holds only for $\rho=\Chi{I}$.
\end{proof}

\begin{lemma}\label{lemma:non-crossing}
    Let $p>1$ be given, and let $\rho\in\AA$ be a maximizer of $\W_p$. If $\nu\in\S^{N-1}$ is such that
    \begin{equation}\label{eq:half-direction}
        \int_{\{x\colon\scal{x}{\nu}>0\}}\rho \,dx = \int_{\{x\colon\scal{x}{\nu}<0\}}\rho \,dx = \frac12,
    \end{equation}
    then the optimal plan $\pi_{\rho}$ satisfies
    \begin{equation}\label{eq:splitting-plan}
        \pi_{\rho}(\{(x,y)\colon\scal{x}{\nu}\cdot\scal{y}{\nu}<0\})=0.
    \end{equation}
\end{lemma}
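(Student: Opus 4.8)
The plan is to deduce the non-crossing of $\pi_{\rho}$ from Lemma~\ref{lemma:symmetry-transport}(ii) via a symmetrization-by-reflection argument, even though $\rho$ itself need not be symmetric across the bisecting hyperplane. Write $H_1 = \{x\colon \scal{x}{\nu}>0\}$ and $H_2 = \{x\colon\scal{x}{\nu}<0\}$, let $F(x) = x - 2\scal{x}{\nu}\nu$ be the reflection across $\{\scal{x}{\nu}=0\}$, and let $\tilde F(x,y) = (F(x),F(y))$. From $\rho$ I build the two reflected densities
\[
    \rho_1 \coloneqq \rho\,\Chi{H_1} + (\rho\circ F)\,\Chi{H_2}, \qquad \rho_2 \coloneqq \rho\,\Chi{H_2} + (\rho\circ F)\,\Chi{H_1}.
\]
Both satisfy $0\le\rho_i\le1$, and by \eqref{eq:half-direction} together with $\int_{H_2}\rho\circ F\,dx = \int_{H_1}\rho\,dx = \tfrac12$ they have unit mass, so $\rho_1,\rho_2\in\AA$. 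Each is invariant under $F$, hence Lemma~\ref{lemma:symmetry-transport} applies: $\tilde F_{\#}\pi_{\rho_i} = \pi_{\rho_i}$ and $\pi_{\rho_i}$ moves no mass across the hyperplane; since the two marginals of $\pi_{\rho_i}$ are absolutely continuous, the hyperplane carries no mass for either marginal, so $\pi_{\rho_i}$ is concentrated on $(H_1\times H_1)\cup(H_2\times H_2)$.

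Next I would glue the ``right-hand halves'' of these two plans, setting
\[
    \pi \coloneqq \pi_{\rho_1}\res(H_1\times H_1) + \pi_{\rho_2}\res(H_2\times H_2).
\]
Because $\pi_{\rho_1}$ does not cross the hyperplane and $\rho_1 = \rho$ on $H_1$, the first summand has first marginal $\rho\L^N\res H_1$ and second marginal bounded by $(1-\rho_1)\L^N\res H_1 = (1-\rho)\L^N\res H_1$; symmetrically, using $\rho_2 = \rho$ on $H_2$, for the second summand. Adding the two, $(p_1)_{\#}\pi = \rho\L^N$ and $(p_2)_{\#}\pi \le (1-\rho)\L^N$, so $\pi\in\AP_{\rho}$, and by construction $\pi$ gives no mass to $\{(x,y)\colon\scal{x}{\nu}\cdot\scal{y}{\nu}<0\}$.

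Finally I would compare costs. Since $\tilde F_{\#}\pi_{\rho_i} = \pi_{\rho_i}$ and $F$ is an isometry exchanging $H_1$ and $H_2$, the two integrals $\int_{H_1\times H_1}|x-y|^p\,d\pi_{\rho_i}$ and $\int_{H_2\times H_2}|x-y|^p\,d\pi_{\rho_i}$ coincide, and as $\pi_{\rho_i}$ lives on their union they each equal $\tfrac12\W_p^p(\rho_i)$. Therefore
\[
    \int|x-y|^p\,d\pi = \tfrac12\W_p^p(\rho_1) + \tfrac12\W_p^p(\rho_2) \le \W_p^p(\rho),
\]
the inequality because $\rho$ is a maximizer, so $\W_p(\rho_i)\le\W_p(\rho)$. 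On the other hand $\pi\in\AP_{\rho}$ forces $\int|x-y|^p\,d\pi\ge\W_p^p(\rho)$, so equality holds and $\pi$ realizes the infimum defining $\W_p(\rho)$. Then $(p_2)_{\#}\pi$ is a minimal-cost admissible target, hence equals $\eta_{\rho}\L^N$ by Theorem~\ref{thm:existence-W_p}, and $\pi$ is an optimal transport plan between $\rho\L^N$ and $\eta_{\rho}\L^N$, so by the uniqueness in Theorem~\ref{thm:existence-uniqueness-transport} (valid since $p>1$ and $\rho\L^N\ll\L^N$) we conclude $\pi = \pi_{\rho}$. As $\pi$ carries no mass across the hyperplane, \eqref{eq:splitting-plan} follows. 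I expect the main obstacle to be the bookkeeping that $\pi\in\AP_{\rho}$ --- in particular that the second marginals of the two restricted plans sum to something $\le(1-\rho)\L^N$ --- together with the even splitting of the cost; both rest on $\pi_{\rho_i}$ being genuinely supported in $(H_1\times H_1)\cup(H_2\times H_2)$, which combines Lemma~\ref{lemma:symmetry-transport}(ii) with the absolute continuity of the marginals.
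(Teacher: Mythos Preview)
Your proof is correct and follows essentially the same approach as the paper's: symmetrize $\rho$ into $\rho_1,\rho_2$, invoke Lemma~\ref{lemma:symmetry-transport}(ii) on the symmetric densities, glue the half-plans, and use maximality plus uniqueness of the optimal plan to conclude. Your version is in fact slightly more streamlined, since you work directly with $\pi_{\rho_1},\pi_{\rho_2}$ and verify $\pi\in\AP_\rho$ by hand, whereas the paper routes the same computation through an auxiliary constrained functional $\AW_p(\cdot,\nu)$ and a short contradiction argument to identify the glued plan as its minimizer.
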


\begin{proof}
    The idea is to consider an auxiliary functional, as in the proof of Proposition~\ref{prop:maximizer-1D}, and show that it coincides with $\W_p$ when evaluated at $\rho$ (due to the maximality of this density). This ensures that $\pi_{\rho}$ has some additional structure due to the uniqueness of the optimal plan. 
    
    We define the auxiliary functional
    \[
        \AW_{p}^p(\rho,\nu)\coloneqq \inf\left\{\int |x-y|^p\,d\pi(x,y)\colon\pi\in\AP_{\rho}, \pi(\{(x,y)\colon\scal{x}{\nu}\cdot\scal{y}{\nu}<0\})=0\right\}.
    \]
    Loosely speaking, this auxiliary functional uses only plans that do not transport mass across the hyperplane $\{x\colon\scal{x}{\nu}=0\}$. As before, we are introducing an additional constraint that is closed under weak$*$ convergence, and thus there exists an optimal plan in the definition of $\AW_p(\rho,\nu)$. Clearly, since we are introducing a constraint in the minimization process, we have that $\W_p(\rho)\leq \AW_p(\rho,\nu)$. 
    
    Let $F(x)= x-2\scal{x}{\nu}\nu$ be the reflection map, and define the two symmetrizations of $\rho$ with respect to $\nu$:
    \[
        \rho_1 = \rho\res H_1+F_{\#}(\rho\res H_1), \qquad \rho_2 = \rho\res H_2+F_{\#}(\rho\res H_2),
    \]
    where $H_1 = \{x\colon\scal{x}{\nu}>0\}$ and $H_2 = F(H_1) = \{x\colon\scal{x}{\nu}<0\}$. We denote by $\bar \pi_{1}$ and $\bar \pi_{2}$ the two optimal plans realizing $\AW_p(\rho_1,\nu)$ and $\AW_p(\rho_2,\nu)$, respectively. We claim that 
    \[
        \bar \pi = \bar \pi_1\res(H_1\times H_1)+ \bar \pi_2\res(H_2\times H_2)
    \]
    realizes $\AW_p(\rho,\nu)$. In fact, $\bar \pi$ is admissible to compute $\AW_p(\rho,\nu)$, and if we find a better candidate $\pi$ to compute $\AW_p(\rho,\nu)$, then we can also construct the following plans that are good candidates to compute $\AW_p(\rho_1,\nu)$ and $\AW_p(\rho_2,\nu)$ respectively:
    \[
        \pi_1 = \pi\res(H_1\times H_1)+\tilde F_{\#}(\pi\res(H_1\times H_1)),\qquad \pi_2 = \pi\res(H_2\times H_2)+\tilde F_{\#}(\pi\res(H_2\times H_2)),
    \]
    where $\tilde F(x,y) = (F(x),F(y))$. Then we observe that
    \begin{gather*}
        \AW_p^p(\rho_1,\nu) = \int |x-y|^p\,d\bar\pi_1 = 2 \int_{H_1\times H_1} |x-y|^p\,d\bar\pi_1,\\
        \AW_p^p(\rho_2,\nu) = \int |x-y|^p\,d\bar\pi_2 = 2 \int_{H_2\times H_2} |x-y|^p\,d\bar\pi_2,\\
        \int |x-y|^p\,d\bar \pi = \frac12\left(\AW_p^p(\rho_1,\nu)+\AW_p^p(\rho_2,\nu)\right),\\
        \int |x-y|^p\,d \pi = \frac12\left(\int |x-y|^p\,d \pi_1+\int |x-y|^p\,d \pi_2\right).
    \end{gather*}
    If $\AW_p^p(\rho,\nu)<\int|x-y|^p\,d\bar\pi$, then at least one between $\pi_1$ and $\pi_2$ is a better competitor for $\AW_p(\rho_1,\nu)$ or $\AW_p(\rho_2,\nu)$, contradicting the definition of $\bar \pi_1$ and $\bar \pi_2$. Therefore, the following conditions hold:
    \begin{equation}\label{eq:symmetrization-maximizer}
        \W_p^p(\rho)\leq \AW_p^p(\rho,\nu) = \frac12\left(\AW_p^p(\rho_1,\nu)+\AW_p^p(\rho_2,\nu)\right) = \frac12\left(\W_p^p(\rho_1)+\W_p^p(\rho_2)\right),
    \end{equation}
    where we used the second part of Lemma~\ref{lemma:symmetry-transport} to obtain the last equality. Since $\rho$ is a maximizer, then \eqref{eq:symmetrization-maximizer} guarantees that $\rho_1$ and $\rho_2$ are also maximizers. This, however, implies that $\W_p(\rho) = \AW_p(\rho,\nu)$. In other words, $\bar \pi$ realizes $\W_p(\rho)$ and satisfies \eqref{eq:splitting-plan}. Therefore, necessarily, we have that $\pi_{\rho}=\bar \pi$, concluding the proof. 
\end{proof}

\medskip

\begin{corol}\label{cor:radial-transport-for-maximizers}
    Let $p>1$ be given, and let $\rho\in\AA$ be a maximizer of $\W_p$. Then there exists $x_0\in\R^N$ such that $\pi_{\rho}$ has the following property:
    \begin{equation}\label{eq:radial-plan}
        \pi_{\rho}\left(\{(x,y)\colon \scal{y-x_0}{x-x_0}\neq |y-x_0||x-x_0|\}\right) = 0.
    \end{equation}
    That is, $\pi_{\rho}$ is radial with center $x_0$.
\end{corol}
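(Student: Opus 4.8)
The plan is to combine Lemma~\ref{lemma:non-crossing} with a procedure that pins down a good center $x_0$, followed by a short geometric argument. First note that the whole construction is translation-covariant: writing $\rho^a(x)\coloneqq\rho(x+a)$ we have $\eta_{\rho^a}(x)=\eta_\rho(x+a)$ and $\pi_{\rho^a}=S_{\#}\pi_\rho$ with $S(x,y)=(x-a,y-a)$, so \eqref{eq:radial-plan} for $\rho$ with center $x_0$ is equivalent to the corresponding statement for $\rho^{x_0}$ with center $0$. Hence Lemma~\ref{lemma:non-crossing} can be read as: \emph{whenever an affine hyperplane $H$ splits the mass of $\rho$ into two equal halves, $\pi_\rho$ transports no mass across $H$.}

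\emph{Step 1 (locating the center).} Fix an orthonormal basis $e_1,\dots,e_N$ and, for each $i$, pick $a_i\in\R$ so that $\{x:\scal{x}{e_i}=a_i\}$ bisects $\rho$; this is possible since $t\mapsto\int_{\{\scal{x}{e_i}>t\}}\rho\,dx$ is continuous (as $\rho\in\AA$ is atomless) and decreases from $1$ to $0$. Put $x_0\coloneqq\sum_i a_ie_i$, the unique common point of these $N$ orthogonal bisecting hyperplanes, and translate so that $x_0=0$. The core claim is that then \emph{every} hyperplane through $0$ bisects $\rho$; equivalently, the odd continuous map $\nu\mapsto a(\nu)$ determined by ``$\{\scal{x}{\nu}=a(\nu)\}$ bisects $\rho$'' vanishes identically. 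This is where maximality enters: by the proof of Lemma~\ref{lemma:non-crossing}, reflecting the maximizer $\rho$ across any of its bisecting hyperplanes again yields a maximizer. The natural approach is to iterate this symmetrization-by-reflection across hyperplanes through $0$ along a countable dense set of directions, using the weak-$*$ continuity of $\W_p$ from Lemma~\ref{lem:weak-continuity-wasserstein} to retain maximality in the limit, together with the classical convergence of iterated reflection-symmetrizations to the radial rearrangement; undoing the construction and invoking uniqueness of $\eta_\rho$ (and of $\pi_\rho$) forces $\rho$ itself to be invariant under all the reflections used, hence radially symmetric about $0$, so $a\equiv0$.

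\emph{Step 2 (bisection $\Rightarrow$ radiality).} With Step 1 in hand, Lemma~\ref{lemma:non-crossing} applies to every $\nu\in\S^{N-1}$ (its bisecting hyperplane passing through $x_0$), giving $\pi_\rho(\{(x,y):\scal{x-x_0}{\nu}\cdot\scal{y-x_0}{\nu}<0\})=0$. Choosing a countable dense family of directions and using continuity of $\nu\mapsto\scal{x-x_0}{\nu}\scal{y-x_0}{\nu}$, we obtain that for $\pi_\rho$-a.e.\ $(x,y)$ one has $\scal{x-x_0}{\nu}\cdot\scal{y-x_0}{\nu}\ge0$ for \emph{every} $\nu\in\S^{N-1}$. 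An elementary argument then forces $y-x_0$ to be a nonnegative multiple of $x-x_0$: if $a\coloneqq x-x_0$ and $b\coloneqq y-x_0$ were nonzero and not positively proportional, choosing $\nu$ in the plane they span close to the direction orthogonal to $a$ (on the side forming an acute angle with $b$, and tilted slightly toward $-a$), or $\nu=a/|a|$ when $b$ is a negative multiple of $a$, yields $\scal{a}{\nu}\cdot\scal{b}{\nu}<0$, a contradiction. (If $a=0$ or $b=0$ the pair trivially satisfies \eqref{eq:radial-plan}.) This is exactly \eqref{eq:radial-plan}, so the claimed center $x_0$ exists.

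\emph{Main obstacle.} The translation bookkeeping and the geometry of Step 2 are routine; the substantive point is Step 1, namely upgrading ``the $N$ coordinate bisecting hyperplanes meet at $x_0$'' to ``every bisecting hyperplane passes through $x_0$''. This is false for general densities — three well-separated bumps of equal mass have non-concurrent bisecting hyperplanes — so it must exploit that $\rho$ maximizes $\W_p$, which it does through the symmetrization-by-reflection built into Lemma~\ref{lemma:non-crossing} and a density/compactness argument resting on Lemma~\ref{lem:weak-continuity-wasserstein}.
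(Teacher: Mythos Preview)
Your Step~2 is fine; the difficulty is entirely in Step~1, and there the argument has a genuine gap. You want to conclude that \emph{every} hyperplane through $x_0$ bisects $\rho$, and you propose to get this by iterating reflection-symmetrizations along a dense set of directions and passing to a weak-$*$ limit. But that iteration produces a \emph{sequence of different maximizers} $\rho_k$ converging to the radial rearrangement $\rho^*$; by Lemma~\ref{lem:weak-continuity-wasserstein} this shows $\rho^*$ is a maximizer, nothing more. The sentence ``undoing the construction and invoking uniqueness of $\eta_\rho$ (and of $\pi_\rho$) forces $\rho$ itself to be invariant under all the reflections used'' is not justified: uniqueness of $\eta_\rho$ and $\pi_\rho$ is a statement about the optimal target and plan for a \emph{fixed} source $\rho$, and carries no information about symmetries of $\rho$ itself. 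In particular nothing prevents $\rho$ from being, a priori, a non-radial maximizer whose orbit under reflections consists of many distinct maximizers. So Step~1 as written does not go through.

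The paper avoids this obstacle by \emph{not} trying to show that every hyperplane through $x_0$ bisects $\rho$. Instead, it uses Lemma~\ref{lemma:non-crossing} only in the $N$ coordinate directions (where bisection is arranged by choice of $x_0$) to see that $\pi_\rho$ does not cross any coordinate hyperplane; hence $\pi_\rho$ splits into $2^N$ pieces, one per orthant. Then, fixing an orthant, it performs only \emph{finitely many} reflections (across the coordinate hyperplanes) of $\rho$ restricted to that orthant, obtaining a maximizer $\tilde\rho$ that agrees with $\rho$ on the chosen orthant and is invariant under $x\mapsto -x$; this symmetry alone guarantees that every hyperplane through $0$ bisects $\tilde\rho$, so Lemma~\ref{lemma:non-crossing} applies to $\tilde\rho$ in all directions and your Step~2 argument gives that $\pi_{\tilde\rho}$ is radial. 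Finally, uniqueness of the optimal plan is used in the right place: the piece of $\pi_\rho$ living in the chosen orthant and the piece of $\pi_{\tilde\rho}$ living there are both optimal plans for the same (orthant-restricted) problem, hence coincide, so that piece of $\pi_\rho$ is radial. Repeating over all $2^N$ orthants yields \eqref{eq:radial-plan}. The moral is that symmetry should be manufactured on an auxiliary density $\tilde\rho$ (by a finite construction), not deduced for $\rho$ via an infinite limiting process.
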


\begin{proof}
    By sliding each hyperplane $\{x\colon\scal{x}{e_i}=0\}$ until it splits the mass of $\rho$ in half, and by taking the intersection of the $N$ hyperplanes, we find a point $x_0\in\R^N$ such that
    \[
        \int_{\{x\colon\scal{x-x_0}{e_i}>0\}}\rho \,dx = \int_{\{x\colon\scal{x-x_0}{e_i}<0\}}\rho \,dx = \frac12\qquad \forall i\in\{1,\ldots,N\}.
    \]
    Up to translations, we suppose that $x_0=0$. By \eqref{eq:symmetrization-maximizer} we know that suitable symmetrizations of $\rho$ with respect to the coordinate axes are again maximizers. Iterating this procedure, we obtain a maximizer $\tilde \rho$ taking successive reflections of the sector
    \begin{equation}\label{eq:sector-rho}
        \rho\res\{x\colon\scal{x}{e_i}>0\ \forall i=1,\ldots,N\},
    \end{equation}
    and the result is a density symmetric with respect to each coordinate direction. 
    The symmetries of $\tilde \rho$ guarantee that
    \[
        \int_{\{x\colon\scal{x}{\nu}>0\}}\tilde{\rho} \,dx = \int_{\{x\colon\scal{x}{\nu}<0\}}\tilde{\rho} \,dx = \frac12\qquad \forall \nu\in\S^{N-1}.
    \]
    Hence, applying Lemma~\ref{lemma:non-crossing} to $\tilde \rho$ we obtain that $\pi_{\tilde \rho}$ satisfies the splitting condition \eqref{eq:splitting-plan} for any vector $\nu$. Thus, the condition \eqref{eq:radial-plan} holds for $\pi_{\tilde\rho}$. We finally conclude by uniqueness of the optimal plan, as we did in the last part of Lemma~\ref{lemma:non-crossing}: we can use the same strategy starting from a different sector in \eqref{eq:sector-rho}, defining a different symmetric density $\tilde \rho$. The same conclusion holds for the new optimal plan associated to that density, namely $\pi_{\tilde \rho}$. By uniqueness of the optimal plan, we know that $\pi_{\rho}$ can be obtained gluing together the plans of each sector, and thus also $\pi_{\rho}$ satisfies \eqref{eq:radial-plan}.
\end{proof}

\medskip

Now we can state and prove our main result.

\begin{theorem}\label{thm:ball-max}
    Let $p>1$ be given. Then the only maximizer of $\W_p$ in the class $\AA$, up to translations, is the characteristic function of $B$ with $|B|=1$.
\end{theorem}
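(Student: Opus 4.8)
The plan is to follow the three-step strategy announced in the introduction: reduce a maximizer to one–dimensional radial data, invoke the one–dimensional maximality of segments (Proposition~\ref{prop:maximizer-1D}), and then use the saturation property of the optimal target (Lemma~\ref{lem:full-ball}) to force the radial data to be a ball. First I would fix a maximizer $\rho\in\AA$, which exists by Theorem~\ref{thm:existence-maximizers}, and, using Corollary~\ref{cor:radial-transport-for-maximizers}, translate it so that $\pi_{\rho}$ is radial about the origin. Passing to polar coordinates and using the coarea formula, I would disintegrate $\pi_{\rho}=\int_{\S^{N-1}}\pi_{\theta}\,d\sigma(\theta)$ along the common ray direction $\theta=x/|x|=y/|y|$, so that each slice $\pi_{\theta}$, read as a measure on $\R^{+}\times\R^{+}$, is an admissible plan for the weighted one–dimensional problem on $\R^{+}$ with reference measure $\gamma=r^{N-1}\L^{1}$ and density $\rho_{\theta}(r):=\rho(r\theta)$ of mass $m(\theta):=\int_{0}^{\infty}\rho_{\theta}(r)r^{N-1}\,dr$ (so that $\int_{\S^{N-1}}m\,d\sigma=1$); the constraint $(p_{2})_{\#}\pi_{\theta}\le(1-\rho_{\theta})\gamma$ passes to the disintegration precisely because $\pi_{\rho}$ is radial. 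An exchange argument then shows that $\pi_{\theta}$ is $1$D–optimal for $\W_{p}(\rho_{\theta})$ for a.e.\ $\theta$: if not, replacing the non-optimal slices by the (unique, by Theorems~\ref{thm:existence-W_p} and~\ref{thm:existence-uniqueness-transport}) optimal one–dimensional plans and regluing would produce an element of $\AP_{\rho}$ of strictly smaller cost, contradicting optimality of $\pi_{\rho}$. Hence
\[
   \W_{p}^{p}(\rho)=\int_{\S^{N-1}}\W_{p}^{p}(\rho_{\theta})\,d\sigma(\theta),
\]
and the same computation carried out over radial plans only shows that, among radial competitors, $\rho$ is optimal ray by ray.

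Next I would invoke Proposition~\ref{prop:maximizer-1D} with the non-decreasing weight $w(r)=r^{N-1}$: for each $\theta$ one has $\W_{p}^{p}(\rho_{\theta})\le\W_{p}^{p}(\Chi{I_{\theta}})$, where $I_{\theta}=[0,\ell(\theta)]$ with $\ell(\theta)^{N}=N\,m(\theta)$, with equality only for $\rho_{\theta}=\Chi{I_{\theta}}$; moreover the homogeneity recorded in Remark~\ref{rem:scaling-wasserstein} gives $\W_{p}^{p}(\Chi{[0,\ell]})=\kappa\,\ell^{N+p}=\kappa\,N^{1+p/N}m^{1+p/N}$ with a constant $\kappa=\kappa(N,p)>0$, a strictly convex function of the mass. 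For the ball $B$ with $|B|=1$, seen as the radial density with constant ray mass $1/|\S^{N-1}|$ (and whose radial transport plan is optimal, by symmetry and uniqueness of $\pi_{\Chi B}$), this yields $\W_{p}^{p}(\Chi B)=\kappa\,N^{1+p/N}|\S^{N-1}|^{-p/N}$, which by Jensen's inequality (since $\int m\,d\sigma=1$) equals the minimum of $\theta\mapsto\kappa\,N^{1+p/N}\int m(\theta)^{1+p/N}d\sigma$. Combining the identity above, the one–dimensional bound, maximality of $\rho$, and the fact that $\Chi B$ is an admissible competitor forces $\rho_{\theta}=\Chi{I_{\theta}}$ for a.e.\ $\theta$ and $m(\theta)\equiv 1/|\S^{N-1}|$; thus $\rho=\Chi E$ with $E=\{r\theta:0\le r<\ell(\theta)\}$ star-shaped about the origin with $\ell$ constant, and $|E|=1$ identifies $E$, up to translation, with $B$. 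Lemma~\ref{lem:full-ball} enters decisively at this last stage: it guarantees that for $\rho=\Chi E$ with $E$ star-shaped the optimal target saturates the constraint $1-\rho$ on the balls $B_{|T_{\rho}x-x|}(x)$, which is what validates the explicit ray-wise transport $\eta_{\rho}=\Chi F$, $F=\{r\theta:\ell(\theta)\le r<2^{1/N}\ell(\theta)\}$, and — letting $x\to 0$ along each ray — forces $B_{\sup_\theta\ell(\theta)}(0)\subseteq E\cup F$, pinning the star-shaped set down to a genuine ball.

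The main obstacle is the passage from \emph{global} maximality of $\rho$ to \emph{ray-by-ray} maximality, i.e.\ to $\rho_{\theta}=\Chi{I_{\theta}}$ with $m(\theta)$ constant. The naive attempt — replace each slice $\rho_{\theta}$ by the segment $\Chi{I_{\theta}}$ and compare energies — fails, because the radial plan is optimal for $\rho$ itself but is in general strictly suboptimal for the star-shaped competitor so obtained (mass in ``fingers'' can escape sideways far more cheaply than radially outward), so one only gets an upper bound on the competitor's energy. The correct route is to first establish, independently, that $\sup_{\AA}\W_{p}$ is attained by a \emph{radial} density: starting from any maximizer one may symmetrize by reflection across bisecting hyperplanes through the center — each such symmetrization keeps us inside the set of maximizers by the inequality \eqref{eq:symmetrization-maximizer} — and, iterating across a well-distributed family of hyperplanes, drive a maximizing sequence weak-$*$ to a radial density which is again a maximizer by the continuity of $\W_{p}$ (Lemma~\ref{lem:weak-continuity-wasserstein}) and which, by the one–dimensional result applied to its profile, must be $\Chi B$. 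Only once $\sup_{\AA}\W_{p}=\W_{p}^{p}(\Chi B)$ is known does the chain of inequalities in the second paragraph become a chain of equalities, yielding both the rigidity of the ray profiles and of the masses and hence uniqueness; controlling the convergence of this symmetrization procedure (tightness along the sequence, convergence of the reflected sets to a radial limit) and the equality cases is the delicate point.
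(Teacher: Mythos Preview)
Your radial disintegration and the slice-by-slice appeal to Proposition~\ref{prop:maximizer-1D} coincide with the paper's Step~1, and you correctly flag the obstruction in passing from a radial plan for $\rho$ to one for the glued star-shaped competitor. The genuine gap, however, is in your Step~2. The chain you assemble is
\[
\W_p^p(\rho)=\int_{\S^{N-1}}\W_p^p(\rho_\theta)\,d\sigma(\theta)\le\int_{\S^{N-1}}\W_p^p(\Chi{I_\theta})\,d\sigma(\theta)=\kappa'\!\int_{\S^{N-1}} m(\theta)^{1+p/N}\,d\sigma(\theta),
\]
and since $m\mapsto m^{1+p/N}$ is \emph{convex}, Jensen gives the \emph{lower} bound $\kappa'\int m^{1+p/N}\,d\sigma\ge\W_p^p(\Chi B)$. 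Both inequalities point the same way: even after your symmetrization argument has established $\W_p^p(\rho)=\W_p^p(\Chi B)$, the chain reads $\W_p^p(\Chi B)\le\kappa'\int m^{1+p/N}\,d\sigma$ twice over, and nothing forces equality in either the one-dimensional bound or in Jensen. You therefore cannot conclude $\rho_\theta=\Chi{I_\theta}$, nor $m\equiv\mathrm{const}$. Your fallback use of Lemma~\ref{lem:full-ball} as $x\to0$ only yields $\inf_\theta\ell(\theta)\ge 2^{-1/N}\sup_\theta\ell(\theta)$, which is far from constancy.

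The paper's Step~2 avoids convexity entirely. Once the maximizer is known to be star-shaped, it applies the interior ball condition of Lemma~\ref{lem:full-ball} at the \emph{tip} $x=\ell^\omega\omega$ of each ray, where the radial transport sends $x$ to $2^{1/N}\ell^\omega\omega$. The resulting saturation ball $B_{(2^{1/N}-1)\ell^\omega}(\ell^\omega\omega)$ must lie in $\{\rho+\eta_\rho=1\}$, which forces $2^{1/N}\ell^\nu$ to exceed the largest $t$ with $t\nu$ in that ball. Solving the associated quadratic in $t$ and interchanging the roles of $\omega$ and $\nu$ produces the two-sided estimate $|\ell^\nu-\ell^\omega|\le C_N\,|\nu-\omega|^2$ for nearby directions; hence $\omega\mapsto\ell^\omega$ is $2$-H\"older on $\S^{N-1}$ and therefore constant. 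This local geometric rigidity argument, not a global energy comparison, is the missing ingredient in your proposal.
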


\begin{proof}
    We prove this result in two steps: we first show that any maximizer must be the characteristic function of a star-shaped set, and then  exploit the inner-ball condition exposed in Lemma~\ref{lem:full-ball} to see that the length of the rays must be constant. Without loss of generality, we can suppose $N\geq 2$ since the $1$-dimensional case has already been treated in Proposition~\ref{prop:maximizer-1D}.

    \medskip
    
    \noindent \textit{Step 1.} First we will apply Corollary~\ref{cor:radial-transport-for-maximizers} and decompose the transport along rays. Then, we exploit the one dimensional result obtained in Proposition~\ref{prop:maximizer-1D} to prove that the maximizer intersects each
    ray in a segment emanating from the origin.

    Let $\rho$ be any maximizer of $\W_p$ in $\AA$. We apply Corollary~\ref{cor:radial-transport-for-maximizers} to $\rho$, and suppose, without loss of generality, that the point $x_0$ coincides with the origin. Therefore, the optimal plan $\pi_{\rho}$ is induced by a radial map $T_{\rho}$. Since in this proof we do not need to stress the dependence of $\eta_{\rho}$, $\pi_{\rho}$ and $T_{\rho}$ on the density $\rho$, we simplify the notation, and we denote those objects by $\eta$, $\pi$ and $T$, respectively. We decompose every function in radial coordinates, and let $w(r) = r^{N-1}$ denote the coarea factor when we integrate in polar coordinates. For any $\omega\in\S^{N-1}$ we define the functions
    \[
        \rho^{\omega}(r) = \rho(r\omega),\qquad \eta^{\omega}(r) = \eta(r\omega),\qquad T^{\omega}(r) = |T(r\omega)|
    \]
    for every $r\in[0,+\infty)$. We consider them as functions defined (almost everywhere) on the metric-measure space $(X,d,\gamma)$, where $X=\R^+$, $\gamma=w\L^1$ and $d$ is the usual distance.
    
    We claim that, since $T(r\omega) = T^{\omega}(r)\omega$ and $T_{\#}\rho = \eta$, we have
    \begin{equation}\label{eq:factorization-transport}
        (T^{\omega})_{\#}\left(\rho^{\omega}\gamma\right) = \eta^{\omega}\gamma\qquad \text{for a.e. } \omega\in\S^{N-1}.
    \end{equation}
    For any $s>0$ and any $E\subset \S^{N-1}$ we define the set $F = \{r\omega\colon0\leq r\leq s,\, \omega\in E\}$ and we have that
    \begin{equation*}
    \begin{split}
        \int_Ed\H^{N-1}_{\omega}\int_0^s\eta^{\omega}\,d\gamma &= \int_Ed\H^{N-1}_{\omega}\int_0^s \eta(r\omega)r^{N-1}\,dr = \int_F\eta(x)\,dx\\
        &=\int_F(T_{\#}\rho)(x)\,dx =\int_{T^{-1}(F)}\rho(x)\,dx\\
        &=\int_Ed\H^{N-1}_{\omega}\int_{(T^{\omega})^{-1}([0,s])}\rho(r\omega)r^{N-1}\,dr\\
        &=\int_Ed\H^{N-1}_{\omega}\int_{(T^{\omega})^{-1}([0,s])}\rho^{\omega}(r)\,d\gamma\\
        &=\int_Ed\H^{N-1}_{\omega}\int_0^s(T^{\omega})_{\#}\rho^{\omega}\,d\gamma.
    \end{split}
    \end{equation*}
    Here we used that $T$ is radial to pass from the second to the third line, in combination with the integration in polar coordinates. Since $E$ and $s$ are arbitrary, this proves \eqref{eq:factorization-transport}.
    
    We obtain the result of this first step by applying Proposition~\ref{prop:maximizer-1D} separately for any $\omega\in\S^{N-1}$. In fact, we can integrate in polar coordinates the transport cost and obtain that
    \begin{align*}
        \int |T(x)-x|^p\rho(x)\,dx &= \int_{\S^{N-1}}\int_0^{+\infty} |T(r\omega)-r\omega|^p\rho(r\omega)r^{N-1}\,dr\,d\omega\\
        &= \int_{\S^{N-1}}\left(\int_0^{+\infty}|T^{\omega}(r)-r|^p\rho^{\omega}(r)w(r)\,dr\right)\,d\omega.
    \end{align*}
    The inner integral in the last expression coincides with the transport cost of $T^{\omega}$ between $\rho^{\omega}\gamma$ and $\eta^{\omega}\gamma$, and since $T$ is the optimal transport map between $\rho$ and $\eta$, then also $T^{\omega}$ must be optimal between $\rho^{\omega}\gamma$ and $\eta^{\omega}\gamma$ for every $\omega\in\S^{N-1}$. This is properly justified by showing that gluing the optimizers $\omega$-by-$\omega$ we obtain a measurable density. We sketch the proof of this fact in Appendix~\ref{sec:appendix}.
     Therefore, if we denote by $m(\omega) = \int_{\R^+} \rho^{\omega}\,d\gamma$, then
    \begin{align}\label{eq:radial-optimal-decomposition}
        \W_p^p(\rho) &= \int_{\S^{N-1}}\W_p^p(\rho^{\omega})\,d\omega \notag\\ 
        &\leq \int_{\S^{N-1}} \sup\left\{\W_p^p(\theta)\colon \theta\colon X\to[0,1],\int_{X}\theta \,d\gamma=m(\omega)\right\}\,d\omega,
    \end{align}
    where we use the metric-measure definition of $\W_p$ in those integrals (see Remark~\ref{rem:metric-measure-setting}).  By Proposition~\ref{prop:maximizer-1D}, for every $\omega\in\S^{N-1}$, the supremum inside the last integral coincides with $\W_p^p(\Chi{I^{\omega}})$, where $I^{\omega}\subset X$ is the unique segment of the form $[0,\ell^{\omega}]$ with $\gamma(I^{\omega}) = m(\omega)$. Moreover, the inequality is strict whenever $\rho^{\omega}$ is not equivalent to $\Chi{I^{\omega}}$. Since the map $\omega\mapsto m(\omega)$ is measurable, we can glue the segments $I^{\omega}$ together and obtain another candidate to compute $\W_p$. The density $\rho$ is a maximizer; hence, for almost every $\omega\in\S^{N-1}$ the density $\rho^{\omega}$ must be equivalent to $\Chi{I^{\omega}}$, concluding the proof of the first step.

    \medskip
    
    \noindent \textit{Step 2.} For any $\omega\in\S^{N-1}$ we know that $T(\ell^{\omega}\omega) =(T^{\omega}(\ell^{\omega}))\omega$, and Lemma~\ref{lem:full-ball} guarantees that $\eta(x) = 1-\rho(x)$ for every $x\in\R^N$ such that $|x-\ell^{\omega}\omega|\leq T^{\omega}(\ell^{\omega})-\ell^{\omega}$. Let $\nu\in\S^{N-1}$ be another unit vector. 
    Note that $T^{\omega}(\ell^{\omega}) = 2^{1/N}\ell^{\omega}$ (see e.g. \cite{C-TG2022} where the transport map in the case of a ball is given explicitly). Thanks to the inner ball condition, we obtain that $T^{\nu}(\ell^{\nu})$ is larger than $t$ for any $t>0$ such that $|t\nu-\ell^{\omega}\omega|\leq T^{\omega}(\ell^{\omega})-\ell^{\omega}$. 
    
    In order to simplify the notation we define $c=2^{1/N}$, $r=\ell^{\omega}$ and $s=\ell^{\nu}$. Taking the square of both sides of the inner-ball inequality (see Figure~\ref{fig:star-shaped-comparison} for a geometric intuition of the inner ball condition in this situation), we get that $s\geq t$ for every $t>0$ satisfying
    \begin{equation*}
        c^2t^2-2c\scal{\nu}{\omega}rt+c(2-c)r^2= 0.
    \end{equation*}
    
    Solving the above equation in $t$ one gets that
    \[
        s\geq \frac{\scal{\nu}{\omega}+\sqrt{ \scal{\nu}{\omega}^{2}-c(2-c) }}{c}\,r \, .
    \]
    By the definition of $c$, the expression under the square root is non-negative whenever $\scal{\nu}{\omega}$ is close enough to $1$ since $c>1$ and $1-c(2-c) = (c-1)^2>0$. Swapping the roles of $\nu$ and $\omega$ we also arrive to the analogous inequality
    \[
        r\geq \frac{\scal{\nu}{\omega}+\sqrt{ \scal{\nu}{\omega}^{2}-c(2-c) }}{c}\,s \, .
    \]
    \begin{figure}[t!]
        \centering
        \includegraphics[width=0.3\textwidth]{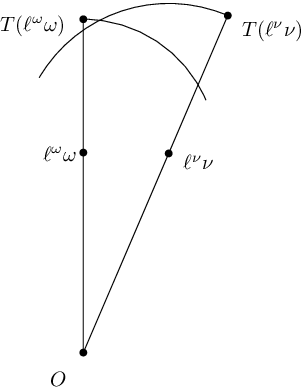}
        \caption{In this figure we depict two points $\ell^{\omega}\omega$ and $\ell^{\nu}\nu$ that belong to the support of $\rho$, and their images through the map $T$, which coincide with $T^{\omega}(\ell^{\omega})\omega$ and $T^{\nu}(\ell^{\nu})\nu$ respectively. The inner ball condition implies that the two image points have to lie outside the circles centered at these points with radii given by the transport distances $T^{\omega}(\ell^{\omega})-\ell^{\omega}$ and $T^{\nu}(\ell^{\nu})-\ell^{\nu}$ respectively.}\label{fig:star-shaped-comparison}
    \end{figure}
    
    Combining these two inequalities we can control the difference between $s$ and $r$ in terms of the distance between $\nu$ and $\omega$:
    \begin{align*}
        s-r&\geq \frac{r}{c}\left( \scal{\nu}{\omega}-c+\sqrt{ \scal{\nu}{\omega}^2-c(2-c) } \right) 
	= \frac{2r(1-\scal{\nu}{\omega})}{\scal{\nu}{\omega}-c-\sqrt{ \scal{\nu}{\omega}^2-c(2-c) }},\\
        s-r&\leq \frac{s}{c}\left(c-\scal{\nu}{\omega}-\sqrt{ \scal{\nu}{\omega}^2-c(2-c) }\right)
	=\frac{2s(1-\scal{\nu}{\omega})}{c-\scal{\nu}{\omega}+\sqrt{ \scal{\nu}{\omega}^{2}+c(2-c) }}.
    \end{align*}
   By Corollary~\ref{cor:bounded-transport-distance} we have that $|T(x)-x|\leq C_N$ for a dimensional constant $C_N$; hence, $r$ and $s$ are also uniformly bounded. Since $2(1-\scal{\nu}{\omega}) = |\nu-\omega|^2$, we can combine the previous estimates and obtain that
    \[
        |\ell^{\nu}-\ell^{\omega}|\leq \tilde C_N |\nu-\omega|^2
    \]
    for any $\nu$ and $\omega$ sufficiently close. This implies that the map $\omega\mapsto \ell^{\omega}$ is $2$-H\"{o}lder continuous on the sphere, hence it is constant. This is equivalent to showing that the only maximizer is the ball, and thus the proof is concluded.
\end{proof}

\medskip
\section{Quantitative inequality in one dimension}\label{sec:quant_ineq}

In this section we prove a quantitative inequality for $\W_p$ in one dimension, so we manage to strengthen the result obtained in Section~\ref{sec:maximizer} adding a term that measures the displacement of a density $\rho$ respect to the characteristic function of a ball. In order to measure that distance, we consider a version of the Frankel asymmetry that, loosely speaking, is the $L^1$ distance between a density and a ball. This choice is by no means new: for example, the asymmetry was used in the quantitative isoperimetric inequality (cfr. \cite{FMP2008,FMP2010}) and in the quantitative Brunn-Minkowski inequality (cfr. \cite{BJ2017}). See also \cite{FP2020,FL2021} for a quantitative inequality involving a functional of Riesz type.

\begin{defin}
    We define the following quantity, that we will just call asymmetry in the sequel:
    \begin{equation*}
        A(\rho)\coloneqq \inf\left\{\norm{\rho-\Chi{B_r(x)}}_1\colon x\in\R^N,|B_r(x)|=1\right\}\qquad \forall \rho\in\AA.
    \end{equation*}
\end{defin}

With this notion, our quantitative inequality reads as the following.

\medskip

\begin{theorem}\label{thm:quantitative-inequality-1D}
    For $N=1$ and $p>1$ fixed, there exists a constant $C_p>0$ such that
    \[
        \W_p^p(B)-\W_p^p(\rho)\geq C_pA(\rho)^2\qquad \text{for all } \rho\in\AA.
    \]
\end{theorem}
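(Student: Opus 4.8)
The plan is to work in the one-dimensional setting, where by Proposition~\ref{prop:maximizer-1D} (with weight $w\equiv 1$) and Theorem~\ref{thm:ball-max} we already know that $\W_p^p(B) - \W_p^p(\rho) \ge 0$ with equality only at $\rho = \Chi{B}$. The goal is to make this gap quantitative. First I would reduce to a one-sided problem: as in the proof of Proposition~\ref{prop:maximizer-1D}, it suffices to bound $\W_p^p(\Chi{I}) - \AW_p^p(\rho)$ from below, where $\AW_p$ is the auxiliary functional allowing only rightward transport and $I = [0,1]$ (after translating so that $\rho$ lives on $\R^+$; note $A(\rho)$ is translation-invariant and comparable under this normalization). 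Using the volume variable $v = V_\rho(x) = \int_0^x \rho$, the identity $\AW_p^p(\rho) = \int_0^1 d_\rho(v)^p\,dv$ and $\W_p^p(\Chi{I}) = \int_0^1 d(v)^p\,dv$ from that proof, together with $d_\rho(v) \le d(v)$, gives
\begin{equation*}
    \W_p^p(\Chi{I}) - \AW_p^p(\rho) = \int_0^1 \bigl(d(v)^p - d_\rho(v)^p\bigr)\,dv \ge p\,2^{(p-1)(1/N-1)}\cdot\text{(const)}\int_0^1 \bigl(d(v) - d_\rho(v)\bigr)\,dv,
\end{equation*}
since $d(v) = (2^{1/N}-1)\ell$ is a fixed positive constant in $N=1$ (namely $d(v)\equiv 1$) and $t\mapsto t^p$ has a positive derivative there. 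So the problem becomes: bound $\int_0^1 (d(v) - d_\rho(v))\,dv$ below by $C A(\rho)^2$.

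Next I would translate the defect $\int_0^1 (1 - d_\rho(v))\,dv$ into geometric information about $\rho$. The key point is the saturation identity \eqref{eq:constraint-saturation}: for the optimal rightward map $\tilde T_\rho$, one has $(\tilde T_\rho)_\#(\rho\,\L^1\res[0,x]) = (1-\rho)\L^1$ on $[x, x+d_\rho(V_\rho(x))]$. Writing $x_\rho(v) = V_\rho^{-1}(v)$, this says that the mass $v$ sitting in $[0,x_\rho(v)]$ exactly fills the ``holes'' $1-\rho$ over the interval $[x_\rho(v), x_\rho(v) + d_\rho(v)]$, i.e.
\begin{equation*}
    v = \int_{x_\rho(v)}^{x_\rho(v) + d_\rho(v)} (1 - \rho(t))\,dt = d_\rho(v) - \bigl(V_\rho(x_\rho(v)+d_\rho(v)) - v\bigr).
\end{equation*}
Hence $1 - d_\rho(v) = \int_{x_\rho(v)}^{x_\rho(v)+d_\rho(v)} \rho(t)\,dt - v \ge 0$, and this quantity measures exactly how much $\rho$-mass lies to the \emph{right} of $x_\rho(v)$ within the transport window but is "wasted" because it is not itself a hole. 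Integrating in $v$ and using Fubini, $\int_0^1 (1-d_\rho(v))\,dv$ is controlled below by (a constant times) the $L^1$-mass of $\rho$ that is "misplaced" relative to the configuration $\Chi{[0,1]}$ — roughly $\int_{[1,\infty)} \rho\,dt$ plus the deficit $\int_0^1 (1-\rho)\,dt$, and these two are equal. Since $A(\rho)$ (one-sided version, after optimizing the center) is comparable to $\min_a\bigl(\int (\rho - \Chi{[a,a+1]})_+\bigr)$, which for the rearranged/normalized $\rho$ is comparable to $\int_{[0,1]^c}\rho$, one gets a \emph{linear} lower bound $\int_0^1(1-d_\rho(v))\,dv \ge c\,A(\rho)$ in many regimes — but this is where the square must enter.

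The square comes from the regime where $\rho$ is close to $\Chi{[0,1]}$ but slightly "spread out". Here the misplaced mass near the endpoint $v$ close to $1$ contributes only at second order: if $\rho = \Chi{[0,1]}$ except for a small perturbation of size $\epsilon$ near the right endpoint (moving an $\epsilon$-sliver from $[1-\delta,1]$ to $[1,1+\delta']$ with $\epsilon \sim \delta \sim \delta'$ so that $A(\rho)\sim \epsilon$), then $d_\rho(v) = 1$ for $v \le 1 - O(\epsilon)$ and $1 - d_\rho(v) = O(\epsilon)$ only on a $v$-set of measure $O(\epsilon)$, giving $\int_0^1(1-d_\rho) \sim \epsilon^2 \sim A(\rho)^2$. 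So the correct statement is the minimum of a linear and a quadratic behavior, and globally $\int_0^1 (1-d_\rho(v))\,dv \ge C\min\{A(\rho), A(\rho)^2\} \ge C' A(\rho)^2$ since $A(\rho) \le 2$ is bounded. I would make this rigorous by a careful case analysis — or, more cleanly, by the following argument: let $a = \sup\{x : \rho = 1 \text{ a.e. on } [0,x]\}$ be the largest "full initial segment"; then for $v \le V_\rho(a) = a$ one has $1 - d_\rho(v) \ge$ (mass of $\rho$ beyond the hole it fills), and for $v$ in an interval of length comparable to $1-a$ near the top, the integrand is bounded below by a quantity comparable to the local defect. Optimizing, one finds that either $a$ is far from $1$ — forcing a large defect and large asymmetry simultaneously, linear regime — or $a$ is close to $1$ and the perturbation is concentrated near the endpoint, quadratic regime.

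The main obstacle I expect is precisely the bookkeeping in this last step: converting the pointwise saturation identity into an honest lower bound on $\int_0^1(1-d_\rho)\,dv$ in terms of $A(\rho)^2$ uniformly over all $\rho \in \AA$, handling both the "bulk displacement" and the "boundary layer" regimes in one estimate. A secondary technical point is that $\eta_\rho$ (hence $d_\rho$) is defined via the \emph{two-sided} optimal map, not $\tilde T_\rho$; I would need the inequality $\W_p(\rho) \le \AW_p(\rho)$ (already noted in Proposition~\ref{prop:maximizer-1D}) to pass from $\AW_p$ back to $\W_p$, so the quantitative bound for $\AW_p$ automatically transfers. Finally, sharpness of the exponent $2$ follows from the explicit boundary-layer family above: for it, $\W_p^p(B) - \W_p^p(\rho) \sim A(\rho)^2$, so no exponent smaller than $2$ can work, which is the asymptotic sharpness claimed in the introduction.
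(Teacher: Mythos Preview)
Your reduction to the one-sided auxiliary functional $\AW_p$ is where the argument breaks. The inequality $\W_p(\rho)\le \AW_p(\rho)$ goes the wrong way for your purposes: it gives $\W_p^p(B)-\W_p^p(\rho)\ge \W_p^p(B)-\AW_p^p(\rho)$, but $\W_p^p(B)=(1/2)^p$ on $\R$ while $\AW_p^p(\rho)$ (computed on $\R^+$ with only rightward transport) can be as large as $1$, so the right-hand side is typically negative. If instead you meant to split $\rho$ into two halves of mass $1/2$ (as the paper does) and compare each half to $\Chi_{[0,1/2]}$ on $\R^+$, the approach fails on the symmetric two-segment density $\rho=\Chi_{[-a-1/2,-a]}+\Chi_{[a,a+1/2]}$: each half is a \emph{translated} segment, the rightward map sends it to the adjacent segment with $d_\rho(v)\equiv 1/2$, and hence $\int(d-d_\rho)\,dv=0$ on both sides --- yet $A(\rho)=4a>0$. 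So your proposed lower bound $\int_0^{1/2}(d-d_\rho)\,dv\ge CA(\rho)^2$ is simply false for this family.

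This is not a bookkeeping issue but a structural one: the entire deficit $\W_p^p(B)-\W_p^p(\rho)$ for the two-segment example comes from the possibility of transporting mass \emph{toward} the origin into the gap $(-a,a)$, which is exactly what $\AW_p$ forbids. The paper's proof confronts this directly: it also starts from the outward plan $\bar\pi$, but instead of estimating $\int(d-d_\rho)\,dv$, it looks for a competitor $\pi\in\AP_\rho$ with $|x-y|\le 1/2$ everywhere and $\pi(\{|x-y|\le \tfrac12-\tfrac{A(\rho)}{100}\})\ge \tfrac{A(\rho)}{100}$, and in its Case~2 (which is precisely the two-segment scenario) it builds this competitor by transporting part of the mass \emph{backward} into the central hole. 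Any repair of your approach would have to incorporate an inward-transport mechanism of this kind; the volume-variable representation from Proposition~\ref{prop:maximizer-1D} alone cannot see it.
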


\medskip

\begin{remark}
    We point out that the exponent $2$ in our quantitative inequality is sharp, in the sense that the inequality would be false with a smaller exponent for densities with small asymmetry. This can be seen by taking $\rho = \Chi{[-1/2-\eps,-1/2]}+\Chi{[-1/2+\eps,1/2-\eps]}+\Chi{[1/2,1/2+\eps]}$ for $\eps$ small. Notice that $\rho$ is symmetric, and using Lemma~\ref{lemma:non-crossing} we can restrict to work in $\R^+$. Moreover, Theorem~\ref{thm:transport-1D} guarantees that the optimal transport map is monotone, and this allows us to compute the transport map $T_{\rho}$. In fact, we claim that the transport map to compute $\W_p(\rho)$ has the following expression for every $x\in\R^+$:
    \begin{equation}\label{eq:sharp-map}
    T_{\rho}(x) = \begin{cases}
        x+\frac{1}{2}-\eps &\text{for }x\in\left(0,\eps\right),\\
        x+\frac{1}{2} & \text{for }x\in\left(\eps,\frac{1}{2}-\eps\right),\\
        x+\frac{1}{2}-\eps &\text{for }x\in\left(\frac{1}{2},\frac{1}{2}+\eps\right).
    \end{cases}
    \end{equation}
    We prove that the optimal transport map coincides with the above expression just for $x\in(0,\eps)$, the other cases being analogous. Suppose that $T_{\rho}(z)=y<z+\frac{1}{2}-\eps$ for some $z\in(0,\eps)$. Since $T_{\rho}$ is monotone increasing in that interval, $T_{\rho}\res (0,z)\leq y$. The $L^{\infty}$-constraint is already saturated in $(0,\frac{1}{2}-\eps)$, and thus we have that
    \[
        \frac{1}{2}-\eps\leq T_{\rho}\res(0,z)\leq y< z+\frac{1}{2}-\eps.
    \]
    But this is not possible since $y-(\frac{1}{2}-\eps) < z = \int_0^z\rho = (T_{\rho})_{\#}(\rho\res(0,z))\leq y-(\frac{1}{2}-\eps)$. This proves that $T_{\rho}$ is pointwise larger or equal than the expression in our claim. However, any map that is strictly larger than the function in \eqref{eq:sharp-map} has also strictly larger transport cost, and thus it is not optimal. With the explicit expression of $T_{\rho}$ the conclusion follows from an easy computation:
    \[
    \begin{split}
        \W_p^p(\rho) &= 2\int_{\R^+}|T_{\rho}(x)-x|^p\rho(x)\,dx = 2\left[\left(\frac{1}{2}-\eps\right)^p\eps + \frac{1}{2^p}\left(\frac{1}{2}-2\eps\right)+\left(\frac{1}{2}-\eps\right)^p\eps\right]\\
        &=\frac{1}{2^p}-\frac{4\eps}{2^p}+4\eps\left(\frac{1}{2}-\eps\right)^p = \W_p^p\left(\Chi{B}\right) - 4\eps \left[\frac{1}{2^p}-\left(\frac{1}{2}-\eps\right)^p\right],
    \end{split}
    \]
    where we used the explicit value of the energy of the ball $B=(-1/2,1/2)$. We thus conclude, since the final expression inside the square parentheses is $O(\eps)$.
\end{remark}

\bigskip

\begin{proof}
    
    By definition of asymmetry, $A(\rho)\leq 2$ for every $\rho\in\AA$, and without loss of generality we can suppose that $A(\rho)>0$. Up to translations, we can suppose that
    \[
        \int_{-\infty}^{0}\rho \,dx = \int_{0}^{+\infty}\rho \,dx = \frac12.
    \]
    Notice that, using the construction in Proposition~\ref{prop:maximizer-1D}, with constant weight $w\equiv 1$, we can produce a transport plan $\bar\pi\in\AP_{\rho}$ with $|x-y|\leq 1/2$ for any $(x,y)\in\spt\bar\pi$. In fact, when the weight is constant, the function $d$ defined in \eqref{eq:d} is constantly equal to $m$ (corresponding to the parameter in the statement of the proposition). Along our argument in Proposition~\ref{prop:maximizer-1D} we show that $d_{\rho}\leq d$, and in the present situation we have that the function $d$ is constantly equal to $1/2$ since $\int_{\R^+}\rho = \int_{\R^+}\rho w=1/2$. This is actually equivalent to saying that $|x-y|\leq 1/2$ for any $(x,y)\in\spt\bar\pi$, since that particular transport plan is induced by the map $\tilde T_{\rho}$ defined in that proposition. 
    Loosely speaking, $\bar \pi$ moves mass ``away from the origin''. Now we want to get a quantitative inequality modifying $\bar\pi$ and finding another plan $\pi\in\AP_{\rho}$ for which the transport distance is again bounded from above by $1/2$ in a pointwise sense, and moreover
    \begin{equation}\label{eq:measure-distance-estimate}
        \pi\left(\left\{(x,y)\colon|x-y|\leq d_A\right\}\right)\geq \frac{A(\rho)}{100},\quad \text{where } d_A\coloneqq \frac12-\frac{A(\rho)}{100}.
    \end{equation}
    With this competitor, if $E=\left\{(x,y)\in\R^N\times\R^N\colon|x-y|\leq d_A\right\}$ is the set considered in the previous inequality, we have that
    \begin{equation*}
    \begin{split}
        \W_p^p(\rho)&\leq \int|x-y|^p\,d\pi(x,y) \leq (d_A)^p\pi(E)+\frac{1}{2^p}(1-\pi(E))\\
        &= \frac{1}{2^p}+\frac{\pi(E)}{2^p}\left[\left(1-\frac{A(\rho)}{50}\right)^p-1\right]\leq \frac{1}{2^p}+\frac{\pi(E)}{2^p}\left(-C_pA(\rho)\right)\\
        &= \W_p^p(B)-C_pA(\rho)^2,
    \end{split}
    \end{equation*}
    where $C_p$ is a constant depending only on $p$. Therefore, we need to find such a plan $\pi$ to complete the proof. We denote by $\bar T$ the map that induces $\bar \pi$. Let us look at the set $\{x\geq0\}$, and we define $x_R$ as the smallest point that is moved at distance $d_A$, i.e. $x_R\coloneqq \inf\{x>0\colon\bar T(x)-x> d_A\}$. 
    Now we explore the different cases that may appear.

    \medskip
    
    \paragraph{\textit{Case 1}} If we have that $\int_0^{x_R}\rho \,dx\geq \frac{A(\rho)}{100}$, then the plan $\bar\pi$ already satisfies \eqref{eq:measure-distance-estimate} and there is nothing to do.

    \medskip

    \paragraph{\textit{Case 2}} Let us suppose that both of the following conditions hold
    \[
        \int_0^{x_R}\rho \,dx< \frac{A(\rho)}{100},\qquad \int_0^{x_R}(1-\rho) \,dx> \frac{A(\rho)}{100}.
    \]
    In this case, we take a point $x_R^1>x_R$ such that $\int_0^{x_R^1}\rho \,dx = \frac{A(\rho)}{100}$,  and we try to move mass in the opposite direction in the segment $[0,x_R^1]$. This is necessary in order to take into account densities similar to the characteristic function of the union of two intervals: in that case, the optimal map actually moves mass toward the origin (see Figure~\ref{fig:segments}). 

\bigskip
    
    \begin{figure}[h!]
    \centering
    \includegraphics[width=0.4\textwidth]{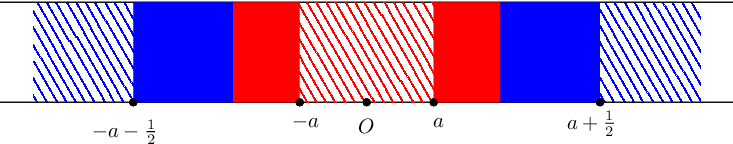}
    \caption{An optimal transport plan in dimension $N=1$ that moves some mass toward the origin. In this example, $\rho=\Chi{E}$ where $E= [-a-1/2,-a]\cup[a,a+1/2]$ for some small $a>0$ (shown in solid color). The optimal transport map sends the solid blue region to the shaded blue region, and the solid red region to the shaded red region. This map realizes $\W_p(\rho)$ for every $p\ge 1$. }\label{fig:segments}
\end{figure}

    To do this, we consider a transport plan tailored to $\rho$ and depending on $x_R^1$ that is obtained again through a minimization process:
    \begin{equation}\label{eq:constrained-transport-backward}
        \min\left\{\int |x-y|^p\,d\pi(x,y)\colon\pi\in\AP_{\rho}, \spt \pi\subset D\right\},
    \end{equation}
    where $D\subset \R\times\R$ is the following domain:
    \[
        D\coloneqq \{(x,y)\colon x\not\in(0,x_R^1), x\cdot(y-x)\geq 0\}\cup\left([0,x_R^1]\times[0,x_R^1]\right).
    \]
    Observe that, since $\int_0^{x_R^1}\rho \,dx=\frac{A(\rho)}{100}<\int_0^{x_R^1}(1-\rho)\,dx$, then it is possible to find a minimizer $\pi$ of \eqref{eq:constrained-transport-backward}. Applying again the structure theorem for optimal plans in one dimension, we find a map $T$ that induces an optimal plan. This transport problem is actually decoupled, considering independently $\rho\res [0,x_R^1]$ and $\rho-(\rho\res[0,x_R^1])$. Hence, it is possible to adapt \cite[Lemma 5.1]{DMSV2016} separately to both pieces and see that $|T(x)-x|\leq d_A$ for every $x\in[0,x_R^1]$. In fact, if this is not the case, then $T_{\#}\rho = 1-\rho$ in a segment $I\subset [0,x_R^1]$ longer than $d_A$. This is impossible since
    \[
        d_A\leq |I| = \int_I (\rho+(1-\rho))\,dx= \int_I(\rho+T_{\#}\rho)\,dx \leq 2\int_0^{x_R^1}\rho \,dx = 2\cdot \frac{A(\rho)}{100}\leq\frac{1}{25},
    \]
    and $d_A = \frac12-\frac{A(\rho)}{100}>\frac13$. Having this uniform bound on the transport length in $[0,x_R^1]$, then we see that $\pi$ satisfies \eqref{eq:measure-distance-estimate} because $\int_0^{x_R^1}\rho \,dx= \frac{A(\rho)}{100}$.

    \medskip

    \paragraph{\textit{Case 3}} Finally, let us suppose that the following inequalities hold at the same time:
    \[
        \int_0^{x_R}\rho \,dx< \frac{A(\rho)}{100},\qquad\int_0^{x_R}(1-\rho)\,dx\leq \frac{A(\rho)}{100}.
    \]
    At this point, we can explore each of the previous cases on the left side of the real line, producing the analogous $x_L = \sup\left\{x<0\colon x-\bar T(x)> d_A\right\}$. Since in the first two cases we managed to construct the desired $\pi$, we can suppose without loss of generality that we are in Case 3 also on the left side. In other words,  the following holds
    \[
        \max\left\{\int_0^{x_R}\rho \,dx, \int_0^{x_R}(1-\rho) \,dx, \int_{x_L}^0\rho \,dx, \int_{x_L}^0(1-\rho) \,dx \right\}\leq \frac{A(\rho)}{100}.
    \]
    Combining these information we obtain an estimate on $|x_R-x_L|$:
    \[
        x_R-x_L = \int_{x_L}^0(\rho+(1-\rho))\,dx + \int_0^{x_R}(\rho+(1-\rho))\,dx \leq \frac{A(\rho)}{25},
    \]
    and we will see that this is not possible because we can get an inequality for the asymmetry of $\rho$. We repeat here the argument of Proposition~\ref{prop:maximizer-1D}: adapting \cite[Lemma 5.1]{DMSV2016} we obtain that $\bar T_{\#}(\rho\res[x_L,0]) = 1-\rho$ in $[x_L-d_A,x_L]$ and $\bar T_{\#}(\rho\res[0,x_R]) = 1-\rho$ in $[x_R,x_R+d_A]$, and thus
    \begin{equation*} 
    \begin{split}
        \int_{x_L-d_A}^{x_R+d_A}\rho \,dx &= \int_{x_L-d_A}^{x_L}\rho \,dx+\int_{x_L}^0\rho \,dx+\int_0^{x_R}\rho \,dx + \int_{x_R}^{x_R+d_A}\rho \,dx\\
        & \geq \int_{x_L-d_A}^{x_L}\rho \,dx+ \int_{x_L-d_A}^{x_L}\bar T_{\#}\rho \,dx +\int_{x_R}^{x_R+d_A}\bar T_{\#}\rho \,dx+\int_{x_R}^{x_R+d_A}\rho\,dx=2d_A.
    \end{split}
    \end{equation*}
    This means that $\int_{x_L-d_A}^{x_R+d_A}\rho \geq 1-\frac{A(\rho)}{50}$. If $x_R+d_A-(x_L-d_A)\leq1$, then by definition of asymmetry
    \[
        A(\rho)\leq 2\int_{-\infty}^{x_L-d_A}\rho\,dx+2\int_{x_R+d_A}^{+\infty}\rho\,dx\leq \frac{A(\rho)}{25},
    \]
    that is impossible. Hence, we know that $x_L-d_A+1<x_R+d_A$. Since we proved that $x_R-x_L\leq \frac{A(\rho)}{25}$, we obtain an inequality always valid in our case: $x_R+d_A-(x_L-d_A) = 1-\frac{A(\rho)}{50}+x_R-x_L \leq 1+\frac{A(\rho)}{50}$. Therefore, we have that
    \begin{align*}
        A(\rho) &\leq 2\int_{x_L-d_A}^{x_L-d_A+1}(1-\rho)\,dx \leq 2\int_{x_L-d_A}^{x_R+d_A}(1-\rho)\,dx \leq 2(x_R-x_L+2d_A)-2\left(1-\frac{A(\rho)}{50}\right)\\
        &\leq 2+\frac{A(\rho)}{25}-2+\frac{A(\rho)}{25} = \frac{2}{25}A(\rho),
    \end{align*}
    and thus we reach a contradiction, concluding the last remaining case.
\end{proof}


\bigskip
\appendix
\section{Sketch of the measurability of the construction in Theorem~\ref{thm:ball-max}}\label{sec:appendix}

In Theorem~\ref{thm:ball-max} we needed to check that the density
\[
    (r,\omega) \mapsto \bar \zeta^{\omega}(r)
\]
is measurable, where $\bar \zeta^{\omega}$ satisfies $\W_p(\rho^{\omega}) = W_p(\rho^{\omega},\bar\zeta^{\omega})$. This is necessary to have the representation in \eqref{eq:radial-optimal-decomposition}. To do that, we approximate $\rho$ in $L^1$ with densities $\rho_k\in\AA$ that are piecewise constant along the sphere. In other words, for every $k$ there exists a partition of the sphere $\S^{N-1} = \bigcup_j E^k_j$ with sets such that $\diam (E^k_j)+|E^k_j|\leq 1/k$, and such that for every $j$
\[
    \rho_k(r\omega) = \rho_k(r\omega') \qquad\forall \omega,\omega'\in E^k_j.
\]
We construct the following densities: for every $k$ and every $\omega\in\S^{N-1}$ we take $\zeta$ such that $\W_p(\rho_k^{\omega}) = W_p(\rho_k^{\omega},\zeta)$ (in the metric-measure sense), and we define
\[
    \zeta_k(r,\omega) = \zeta(r).
\]
In other words, $\zeta_k^{\omega}$ is the optimal density to compute $\W_p(\rho_k^{\omega})$. This density is measurable since it is piecewise constant along the sphere. Since $\rho_k\to\rho$ in $L^1$, then $\rho_k^{\omega}\to \rho^{\omega}$ in $L^1$ for a.e. $\omega\in\S^{N-1}$. For this reason, we say that $\zeta_k^{\omega}\to \bar \zeta^{\omega}$ in weak$*$ sense for a.e. $\omega$.

To see this, notice that $\zeta_k^{\omega}$ converges to some density $\phi^{\omega}$ because the sequence $\rho_k^{\omega}$ is bounded in $L^{\infty}$, and the transport distance is bounded when the mass of $\rho_k^{\omega}$ is finite, that happens for a.e. $\omega$. By lower semicontinuity of the transport distance we have that
\[
    \W_p(\rho^{\omega}) = W_p(\rho^{\omega},\bar\zeta^{\omega}) \leq W_p(\rho^{\omega},\phi^{\omega}) \leq \liminf_k W_p(\rho_k^{\omega},\zeta_k^{\omega}) = \W_p(\rho^{\omega}),
\]
where we used that $\rho^{\omega}+\phi^{\omega}\leq1$ in the first inequality, and the continuity of $\W_p$ with respect the weak$*$ convergence in the last equality. Since the optimal density to compute $\W_p(\rho^{\omega})$ is unique, then $\bar \zeta^{\omega} = \phi^{\omega} = \lim_k \zeta_k^{\omega}$.

We finally conclude because $\zeta_k\to \zeta_\infty$ for some $\zeta_\infty$ in weak$*$ sense, and $\zeta_\infty$ is therefore measurable. Moreover, a little argument shows that, whenever $f_k:X\times Y\to\R$ converges in weak$*$ sense to $f$ ($X$ and $Y$ being reasonable spaces, in our case $X=\R^+$ and $Y=\S^{N-1}$), then for almost every $y\in Y$ we have that
\[
    f_k\res (X\times \{y\}) \weakstar f\res (X\times \{y\}).
\]
Hence, for almost every $\omega\in\S^{N-1}$ we have that
\[
    \zeta_k^{\omega}\weakstar \zeta_\infty^{\omega},
\]
and our previous argument shows also that
\[
    \zeta_k^{\omega}\weakstar \bar\zeta^{\omega}\qquad \text{for a.e. }\omega\in\S^{N-1}.
\]
Combining these facts, we get that $\bar\zeta = \zeta_\infty$ almost everywhere, and thus $\bar \zeta$ is measurable, as we wanted.

\bigskip
\subsection*{Acknowledgments}
We would like to thank Rupert Frank for pointing out this problem to us. D.C. is member of the Istituto Nazionale di Alta Matematica (INdAM), Gruppo Nazionale per l’Analisi Matematica, la Probabilità e le loro Applicazioni (GNAMPA), and is partially supported by the INdAM--GNAMPA 2023 Project \textit{Problemi variazionali per funzionali e operatori non-locali}, codice CUP\_E53\-C22\-001\-930\-001. D.C. wishes to thank Virginia Commonwealth University for the generous hospitality provided during his visit, which marked the beginning of this project. I.T.'s research was partially supported by a Simons Collaboration grant 851065 and an NSF grant DMS 2306962.

\bibliographystyle{IEEEtranSA}
\bibliography{references}

\end{document}